\numberwithin{equation}{section}
\theoremstyle{plain}
\newtheorem{thm}{Theorem}[section]
\newtheorem{prop}[thm]{Proposition}
\newtheorem{lem}[thm]{Lemma}
\newtheorem{cor}[thm]{Corollary}
\newtheorem*{Prop3.2}{Proposition 3.2}
\newtheorem*{thm4.2}{Theorem 4.2}
\newtheorem*{thm*}{Theorem}
\theoremstyle{definition}
\newtheorem{defn}[thm]{Definition}
\theoremstyle{remark}
\newtheorem*{rem}{Remark}
\newtheorem*{ex}{Example}
\newcommand{\mbb}[1]{\mathbb{#1}}
\newcommand{\wt}[1]{\widetilde{#1}}
\newcommand{\wh}[1]{\widehat{#1}}
\newcommand{\lie}[1]{{\mathfrak{#1}}}
\renewcommand{\leq}{\leqslant}
\renewcommand{\geq}{\geqslant}
\DeclareMathOperator{\Id}{Id}
\DeclareMathOperator{\End}{End}
\DeclareMathOperator{\Aut}{Aut}
\DeclareMathOperator{\diag}{diag}
\DeclareMathOperator{\kod}{kod}
\DeclareMathOperator{\Alb}{Alb}
\subjclass[2010]{32J18, 32M25}
\title[Compact complex non-K\"ahler manifolds]{Compact complex non-K\"ahler 
manifolds associated with totally real reciprocal units}
\author{Christian Miebach}
\address{Univ.~Littoral C\^ote d'Opale, UR 2597, LMPA, Laboratoire de 
Math\'ematiques Pures et Appliqu\'ees Joseph Liouville, F-62100 Calais, France}
\email{christian.miebach@univ-littoral.fr}
\author{Karl Oeljeklaus}
\address{Aix-Marseille Univ, CNRS, Centrale Marseille, I2M, UMR 7373,
CMI, 39, rue F.~Joli\-ot-Curie, 13453 Marseille  Cedex 13, France}
\email{karl.oeljeklaus@univ-amu.fr}
\thanks{The authors would like to thank Professor A. Dubickas for helping  
kindly in number theoretical questions in particular for giving 
Lemma~\ref{Lem:reciprocalunit} with a proof. The authors are grateful for 
invitations to the ``Institut de MathŽmatiques de Marseille (I2M)'' and  
``Laboratoire de Math\'ematiques Pures et Appliqu\'ees Joseph Liouville 
(LMPA)'',  where part of this research was carried out.} 
\begin{document}

\begin{abstract}
Using the theory of totally real number fields we construct a new class of 
compact complex non-K\"ahler manifolds in every even complex dimension and 
study their analytic and geometric properties.
\end{abstract}
\bigskip

\dedicatory{Dedicated to Alan T. Huckleberry on the occassion of his 80th birthday}
\maketitle

\section{Introduction}
In this paper we construct a new class of compact complex non-K\"ahler manifolds
in every complex dimension $n=2d, d\geq 1$, and investigate their complex 
analytic and topological properties.

Using totally real number fields we construct first $4d$-dimensional real 
solvable Lie groups $G$ admitting irreducible cocompact discrete subgroups 
$\Gamma$. This method works in general to produce {\it real solv-manifolds}. 
Next we show that these Lie groups admit left invariant complex structures. The 
left quotient $ X:=\Gamma \backslash G$ is then a compact complex manifold. In 
the case $d=1$, one recovers the Inoue surfaces noted $S_{N}^{ \mathsmaller 
{(+)}}$ in the famous paper \cite{In74}, whose extension to higher dimensions 
had remained open since 1974. In 2005, Inoue surfaces of type $S_M$ were 
generalized in~\cite{OT}.

In the following sections we prove that the identity component of the 
holomorphic automorphism group $\Aut^0(X)$ is isomorphic to $(\mbb C^{*})^{d}$ 
and that the whole group $\Aut(X)$ has infinitely many components if $d\geq2$.

The action of $(\mbb C^{*})^{d}$ is free, induces a holomorphic foliation 
$\mathcal F$ which is transversely hyperbolic, and is preserved by the whole 
automorphism group. If $d=2$, the restriction of certain automorphisms of $X$ 
to the tangent bundle $T{\mathcal F}$ has an Anosov property in the sense that 
this bundle splits transversely into a stable and an unstable subbundle.

Furthermore we determine some topological invariants, prove that $X$ is 
non-K\"ahler and show that the algebraic dimension is zero.

Open questions are whether some of the here  constructed manifolds are locally 
conformally K\"ahler, whether they admit proper complex subvarieties, 
 as well as whether they admit Anosov diffeomorphisms relative 
to $\mathcal{F}$ also for $d\geq3$.

\section{The construction}

In this section we explain in detail the construction of a new 
class of compact complex manifolds. In the first two subsections we collect for 
the reader's convenience a number of well-known facts about simply-connected 
nilpotent Lie groups, their rational structures and cocompact discrete 
subgroups, and the free $2$-step nilpotent Lie algebra. In Section~2.3 we 
shall see how particular totally real number fields $K$ allow the construction 
of irreducible rational structures on the $d$-fold product $N$ of the 
three-dimensional real Heisenberg group. Then we extend $N$ by an Abelian group 
in such a way that the corresponding solvable group possesses a cocompact 
discrete subgroup associated with the group of algebraic units in $K$, see 
Section~2.4. Finally, we show that the so obtained solv-manifolds carry a 
complex structure, which completes our construction.

\subsection{Cocompact discrete subgroups of nilpotent Lie 
groups}\label{Section:NilpotentLattices}

In this section we  recall some facts related to cocompact discrete subgroups 
of simply-connected nilpotent Lie groups. For proofs and more details we refer 
the reader to~\cite[Chapter~II]{Rag}.

Let $N$ be a simply-connected nilpotent real Lie group with Lie algebra 
$\lie{n}$. A rational structure on $N$ consists of a rational subalgebra 
$\lie{n}_{\mbb{Q}}$ of $\lie{n}$ such that $\lie{n}_{\mbb{Q}}\otimes_{\mbb{Q}} 
\mbb{R}\cong\lie{n}$. Equivalently, a rational structure on $N$ is given by a 
basis $\mathcal{B}=(\xi_1,\dotsc,\xi_n)$ of $\lie{n}$ such that for all $1\leq 
k<l\leq n$ the coordinates of $[\xi_k,\xi_l]$ with respect to $\mathcal{B}$, 
i.e., the structure constants of $\lie{n}$ with respect to $\mathcal{B}$, are 
rational.

Two rational structures on $N$ are called isomorphic if the corresponding 
rational Lie algebras are isomorphic. A rational structure on $N$ is called 
{\emph{irreducible}} if $\lie{n}_{\mbb{Q}}$ is not isomorphic to the direct sum 
of two non-trivial ideals.

\begin{rem}
There are simply-connected nilpotent Lie groups $N$ that do not admit any 
rational structure. It is also possible that $N$ possesses several 
non-isomorphic rational structures, see~\cite[Remarks~2.14 and 2.15]{Rag}.
\end{rem}

In order to explain how a rational structure on $N$ yields cocompact discrete 
subgroups of $N$, we restate~\cite[Theorem~2.12]{Rag} for the reader's 
convenience. Let $\Lambda\subset\lie{n}$ be any lattice of maximal rank 
contained in $\lie{n}_{\mbb{Q}}$. Then the group $\Gamma$ generated by 
$\exp(\Lambda)$ in $N$ is a cocompact discrete subgroup of $N$. Any two 
discrete subgroups associated with the same rational structure are 
commensurable. Conversely, if $\Gamma\subset N$ is a cocompact discrete 
subgroup, then the $\mbb{Z}$-span of $\exp^{-1}(\Gamma)$ in $\lie{n}$ is a 
lattice of maximal rank in $\lie{n}$ and any basis of $\lie{n}$ contained in 
this lattice defines a rational structure on $N$. If $\wt{\Gamma}$ is 
commensurable with $\Gamma$, then the associated rational structures are 
isomorphic.

\begin{rem}
It follows from the preceding considerations that the rational structure on 
$N$ is irreducible, if and only if the associated cocompact discrete subgroup of $N$ is 
not commensurable to the direct product of two non-trivial normal subgroups.
\end{rem}

\subsection{The free $2$-step nilpotent Lie algebra}

Let $V$ be a $2d$-dimensional real vector space and let $\bigwedge^2V$ denote 
its exterior algebra. On the vector space $V\oplus\bigwedge^2V$ we define a Lie 
bracket by
\begin{equation*}
\bigl[(v,\alpha),(w,\beta)\bigr]:=(0,v\wedge w).
\end{equation*}
The resulting Lie algebra is the free $2$-step nilpotent Lie algebra 
$\lie{f}_{2d}$ of dimension $2d+\left(\begin{smallmatrix}2d\\2\end{smallmatrix} 
\right)=2d^2+d$. We have
\begin{equation*}
W:=\bigwedge^2V=\lie{f}_{2d}'=\mathcal{Z}(\lie{f}_{2d}),
\end{equation*}
where $\mathcal{Z}(\lie{f}_{2d})$ denotes the center of $\lie{f}_{2d}$.

Let $F_{2d}$ be the simply-connected nilpotent Lie group with Lie algebra 
$\lie{f}_{2d}$.

\begin{ex}
The Lie algebra $\lie{f}_{2}$ is isomorphic to the $3$-dimensional Heisenberg 
algebra $\lie{h}_3$. An explicit isomorphism is given by
\begin{equation*}
\lie{h}_3\to\lie{f}_{2},\quad
\begin{pmatrix}
0&x&z\\0&0&y\\0&0&0
\end{pmatrix}
\mapsto(xe_1+ye_2,ze_1\wedge e_2)\in\mbb{R}^2\oplus\bigwedge^2\mbb{R}^2.
\end{equation*}
On the group level, one can realize the $3$-dimensional Heisenberg group as
\begin{equation*}
H_3:=\left\{
\begin{pmatrix}
1&x&z\\0&1&y\\0&0&1
\end{pmatrix};\ x,y,z\in\mbb{R}
\right\}.
\end{equation*}
The map
\begin{equation*}
\begin{pmatrix}
1&x&z\\0&1&y\\0&0&1
\end{pmatrix}
\mapsto\left(x,y,z-\frac{xy}{2}\right)
\end{equation*}
yields an explicit isomorphism with the realization of the Heisenberg group as 
the free nilpotent group $F_2$, with group structure given by 
\[(x,y,z)(\tilde x,\tilde y,\tilde z)=\left(x +\tilde x,y+\tilde 
y,z+\tilde z+\frac{1}{2}(x\tilde y - \tilde x y)\right).\]
We shall  therefore use in the sequel $H_3$ as a model for $F_2$.
\end{ex}

The choice of any basis of $V$ leads to a rational structure on $F_{2d}$ as 
follows. Let $(e_1,\dotsc,e_{2d})$ be a basis of $V$ and put 
$f_{k,l}:=[e_k,e_l]$. Then
\begin{equation*}
(e_1,\dotsc,e_{2d},f_{1,2},f_{1,3},\dotsc,f_{2d-1,2d})
\end{equation*}
is a basis of $\lie{f}_{2d}$ with respect to which the structure constants of 
$\lie{f}_{2d}$ are rational. As explained above, this procedure yields 
cocompact discrete subgroups of $F_{2d}$.

\subsection{Rational structures associated with totally real number fields}

Let $\mbb{Q}\subset K$ be a field extension of degree $2d$, $d\geq1$, and let 
$\mathcal{O}_K \subset K$ be its ring of  algebraic integers. Choose elements 
$\omega_1,\dotsc,\omega_{2d}\in\mathcal{O}_K$ such that
\begin{equation*}
\mathcal{O}_K\cong\mbb{Z}\omega_1\oplus\dotsb\oplus\mbb{Z}\omega_{2d}
\end{equation*}
as $\mbb{Z}$-modules.

We suppose that $K$ is totally real, i.e., that all $2d$ embeddings 
$\sigma_1,\dotsc,\sigma_{2d}\colon K\to\mbb{R}\subset \mbb C$ are real and 
consider the map $\sigma\colon K\to V:=\mbb{R}^{2d}$ given by 
$\sigma(x)=\bigl(\sigma_1(x),\dotsc,\sigma_{2d}(x)\bigr)$. It follows that 
$\Lambda_K:=\sigma(\mathcal{O}_K)\subset V$ is a lattice of maximal rank 
generated by $e_k:=\sigma(\omega_k)$ for $1\leq k\leq 2d$.

Now, as mentioned above, the basis $\mathcal{B}=\bigl(e_{1},\dotsc,e_{2d}\bigr)$ 
of $V$ yields the basis
\begin{equation*}
\wt{\mathcal{B}}=(e_1,\dotsc,e_{2d},f_{1,2},f_{1,3},\dotsc,f_{2d-1,2d})
\end{equation*}
of $ \lie{f}_{2d} $ and induces therefore a rational structure on $F_{2d}$. In 
the following, $(\lie{f}_{2d})_{\mbb{Q}}$ {\emph{always}} denotes the 
corresponding rational Lie algebra.
 
Let $\mathcal{O}^*_K$ be the (multiplicative) group of units in $\mathcal{O}_K$.
We say that a unit $u\in\mathcal{O}^*_K$ is totally positive if $\sigma_j(u)>0$ 
for all $1\leq j\leq 2d$, and we write $\mathcal{O}^{*,+}_K$ for the group of 
totally positive units. Due to Dirichlet's theorem, the group 
$\mathcal{O}^{*,+}_K$ is isomorphic to $\mbb{Z}^{2d-1}$.

The group $\mathcal{O}^{*,+}_K$ acts on $\Lambda_K$ as a group of $\mbb 
Z$-module automorphisms via
\begin{equation*}
u\cdot\sigma(x):=\sigma(ux).
\end{equation*}
Extending $\sigma(x)\mapsto\sigma(ux)$ to an $\mbb{R}$-linear map $\rho(u) 
\colon V\to V$ we obtain a representation
\begin{equation*}
\rho\colon \mathcal{O}^{*,+}_K\to{\rm{SL}}(V).
\end{equation*}
 Note that we have $\det\rho(u)=1$ for all $u\in 
\mathcal{O}^{*,+}_K$ since the eigenvalues of $\rho(u)$ are the conjugates
$\lambda_k=\sigma_{k}(u)$, $k=1,\dotsc,2d$, of $u$ and thus all positive. 

\begin{rem}
Every element of ${\rm{GL}}(V)$ extends uniquely to an automorphism of the Lie 
algebra $\lie{f}_{2d}$. If the matrix of an element in ${\rm{GL}}(V)$ with 
respect to $\mathcal{B}$ has rational coefficients, we obtain an automorphism 
of $(\lie{f}_{2d})_{\mbb{Q}}$.
\end{rem}

Note that by construction the matrix of $\rho(u)$ with respect to $\mathcal{B}$ 
lies in ${\rm{SL}}(2d,\mbb{Z})$ for every $u \in \mathcal O^{*,+}_K$. Hence, 
$\rho(\mathcal{O}^{*,+}_K)$ is a discrete subgroup of ${\rm{SL}}(V)$. 
Furthermore, $\rho(u)$ induces an automorphism of $\lie{f}_{2d}$ that leaves 
$(\lie{f}_{2d})_{\mbb{Q}}$ invariant. We will denote this automorphism of 
$\lie{f}_{2d}$ as well as its restriction to $(\lie{f}_{2d})_{\mbb{Q}}$ by 
$\wh{\rho}(u)$, where 
$\wh{\rho}\colon\mathcal{O}^{*,+}_K\to\Aut\bigl((\lie{f}_{2d})_{\mbb{Q}}
)\bigr)$. Clearly, $\wh{\rho}(u)$ respects the decomposition
\begin{equation*}
(\lie{f}_{2d})_{\mbb{Q}}=V_{\mbb{Q}}\oplus W_{\mbb{Q}} 
\end{equation*}
and the eigenvalues of the restriction of $\wh{\rho}(u)$ to $W_{\mbb{Q}}$ are 
the products $\sigma_{k}(u)\sigma_{l}(u)=\lambda_{k} \lambda_{l}$, $1\leq k<l 
\leq 2d$.

We formulate the following lemma in our setting, but it remains also valid in  a 
more general form. Its proof is a slight adaptation of the proof 
of~\cite[Proposition~2.1.4]{Ash}.

\begin{lem}\label{Lem:simdiagonal}
The set of all endomorphisms $\rho(u)$ of $V_{\mbb{Q}}$, $u\in\mathcal{O}_K^*$  
is simultaneously diagonalizable over the Galois closure $L$ of $K$.
\end{lem}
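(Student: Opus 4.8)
The plan is to identify the family $\{\rho(u)\}$ with the multiplication operators on $K$ and then invoke the splitting of $K$ over its Galois closure. First recall that $\sigma$ restricts to a $\mbb{Q}$-linear isomorphism $\sigma\colon K\to V_{\mbb{Q}}$, since $V_{\mbb{Q}}$ is by construction the $\mbb{Q}$-span of the $e_k=\sigma(\omega_k)$ and $K=\mbb{Q}\otimes_{\mbb{Z}}\mathcal{O}_K$. By the very definition of $\rho$ we have $\rho(u)\bigl(\sigma(x)\bigr)=\sigma(ux)$, so under $\sigma$ the endomorphism $\rho(u)$ is conjugate to the multiplication operator $m_u\colon K\to K$, $x\mapsto ux$. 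Hence it suffices to diagonalize the commutative family $\{m_u : u\in\mathcal{O}_K^*\}$ simultaneously over $L$; in fact I will diagonalize all $m_x$, $x\in K$, at once.

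The key step is the splitting of the \'etale $\mbb{Q}$-algebra $K$ over $L$. Since $\mbb{Q}$ has characteristic zero, $K/\mbb{Q}$ is separable, and because $L$ is the Galois closure of $K$ the images of all $2d$ embeddings $\sigma_1,\dotsc,\sigma_{2d}$ lie in $L$. Concretely, writing $K=\mbb{Q}(\theta)$ with minimal polynomial $p$ of degree $2d$, one has $K\otimes_{\mbb{Q}}L\cong L[t]/\bigl(p(t)\bigr)$, and $p$ splits over $L$ into distinct linear factors with roots $\sigma_j(\theta)$, so the Chinese remainder theorem yields the $L$-algebra isomorphism
\[ K\otimes_{\mbb{Q}}L\longrightarrow L^{2d},\qquad x\otimes\ell\mapsto\bigl(\sigma_1(x)\ell,\dotsc,\sigma_{2d}(x)\ell\bigr). \]
Under this isomorphism $m_u\otimes\id_L$ is carried to the diagonal matrix $\diag\bigl(\sigma_1(u),\dotsc,\sigma_{2d}(u)\bigr)$. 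Consequently the preimages in $V_{\mbb{Q}}\otimes_{\mbb{Q}}L\cong K\otimes_{\mbb{Q}}L$ of the standard basis of $L^{2d}$ form a single basis that simultaneously diagonalizes every $\rho(u)$, the eigenvalue on the $k$-th vector being $\lambda_k=\sigma_k(u)$.

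I expect no serious obstacle: the entire content is the decomposition of the separable extension $K$ over a field containing all its conjugates, which is exactly the role played by the Galois closure $L$. The only points to check are that the eigenvalues $\sigma_k(u)$ indeed lie in $L$, which is immediate from the defining property of $L$, and that the displayed map is an isomorphism, which follows from the primitive element theorem together with the Chinese remainder theorem as above. An equivalent, more abstract route\,---\,closer to the cited \cite[Proposition~2.1.4]{Ash}\,---\,is to observe that the operators $m_u$ pairwise commute, as $K$ is a commutative ring, and that each is diagonalizable over $L$, its characteristic polynomial splitting in $L$ with the conjugates $\sigma_k(u)$ as roots and its minimal polynomial being separable; the standard theorem on commuting families of diagonalizable operators then gives the simultaneous diagonalization.
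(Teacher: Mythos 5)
Your argument is correct, but it takes a genuinely different route from the paper's. The paper first invokes the existence of a \emph{primitive} totally positive unit $u_0\in\mathcal{O}_K^{*,+}$ (a non-trivial number-theoretic input, quoted from Za\"imi--Bertin--Aljouiee), observes that the characteristic polynomial of $\rho(u_0)$ then equals the minimal polynomial of $u_0$, so that $\rho(u_0)$ is diagonalizable over $L$ with $2d$ pairwise distinct eigenvalues, and concludes by commutativity: each one-dimensional eigenspace of $\rho(u_0)$ is stabilized by every $\rho(u)$, which forces the whole family to be diagonal in that eigenbasis. You instead diagonalize the entire multiplication algebra in one stroke via the splitting $K\otimes_{\mbb{Q}}L\cong L^{2d}$ of the \'etale $\mbb{Q}$-algebra $K$ over its Galois closure; the idempotents of $L^{2d}$ pull back to a common eigenbasis for every $m_x$, $x\in K$, with eigenvalues $\sigma_k(x)$. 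Your approach buys economy of hypotheses and generality: it needs neither the existence of a primitive \emph{unit} nor total realness of $K$ (only separability, which is automatic in characteristic zero), and it exhibits the common eigenbasis and all eigenvalues explicitly. The paper's approach, a slight adaptation of Ash's, is shorter to state once a primitive unit is known to exist, and the ``one operator with simple spectrum plus an abelian group'' pattern fits the way $u_0$ and its eigenspaces are used again in Propositions~\ref{Prop:directsum} and~\ref{Prop:irreducible}. Both proofs produce the same eigenvalues $\lambda_k=\sigma_k(u)$ used in the sequel, so either can be substituted for the other without affecting the rest of the construction.
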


\begin{proof}
Since the number field $K$ is totally real, there is a primitive unit $u_0\in 
\mathcal{O}^{*,+}$, see e.g. \cite[Theorem~1.4]{ZBA}. Note that $V_{\mbb{Q}}$ 
and $K$ are isomorphic as rational vector spaces and that $\rho(u_0)$ 
corresponds to multiplication by $u_0$. It follows that the characteristic 
polynomial of $\rho(u_0)$ coincides with the minimal polynomial of $u_0$. Hence, 
$\rho(u_0)$ is diagonalizable over $L$ and each of its eigenvalues has 
multiplicity $1$. Since the group $\mathcal{O}^{*,+}$ is Abelian, it stabilizes 
every eigenspace, which proves the claim.
\end{proof}

In order to proceed with the construction, we need the following lemma which 
was communicated to us with proof by Professor A. Dubickas. 
Recall that a unit $u\in\mathcal{O}^{*,+}_K$ is called \emph{reciprocal} if $u$ 
and $u^{-1}$ are conjugate, i.e., have the same minimal polynomial, which then 
is palindromic. Moreover, $u$ is \emph{primitive} if $K=\mbb{Q}(u)$.

\begin{lem}\label{Lem:reciprocalunit} 
For every $d\geq1$ there exist  totally real number fields of degree $2d$ which 
admit primitive reciprocal units. 
\end{lem}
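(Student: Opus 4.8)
The plan is to manufacture the unit through the substitution $\theta=u+u^{-1}$, which turns a totally real algebraic integer $\theta$ of degree $d$ into a reciprocal unit of degree $2d$. Concretely, I take a totally real algebraic integer $\theta$ of degree $d$ all of whose conjugates $\theta_1,\dotsc,\theta_d$ exceed $2$, let $g\in\mbb{Z}[x]$ be its (monic, degree $d$) minimal polynomial, and form
\begin{equation*}
P(x):=x^d\,g\!\left(x+x^{-1}\right)=\prod_{j=1}^d\left(x^2-\theta_j x+1\right)\in\mbb{Z}[x].
\end{equation*}
This $P$ is monic and palindromic of degree $2d$ with constant term $1$. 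Since each $\theta_j>2$, the discriminant $\theta_j^2-4$ is positive and each factor $x^2-\theta_jx+1$ has two positive real roots (product $1$, sum $\theta_j$); hence all $2d$ roots of $P$ are real and positive. I let $u$ be one of them and set $K:=\mbb{Q}(u)$. Then $u$ is an algebraic integer whose inverse $u^{-1}=\theta-u$ is again an algebraic integer, so $u\in\mathcal{O}_K^{*,+}$ is a totally positive unit; and because $P$ is palindromic, $u$ and $u^{-1}$ are roots of the same polynomial and hence conjugate, i.e.\ $u$ is reciprocal.

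It then remains to guarantee that $u$ is \emph{primitive}, that is, that $[K:\mbb{Q}]=2d$, equivalently that $P$ is irreducible over $\mbb{Q}$. As $g$ is irreducible, $F:=\mbb{Q}(\theta)$ has degree $d$, and one checks that $P$ is irreducible if and only if the quadratic $x^2-\theta x+1$ is irreducible over $F$, i.e.\ if and only if $\theta^2-4$ is not a square in $F$ (if it is, then $u\in F$ and $\mbb{Q}(u)=F$ has degree only $d$). Here I would use the norm criterion: a square in $F$ has square norm, so it suffices to arrange that
\begin{equation*}
\mathrm{N}_{F/\mbb{Q}}\!\left(\theta^2-4\right)=\prod_{j=1}^d\left(\theta_j^2-4\right)=g(2)\,g(-2)
\end{equation*}
is not a perfect square in $\mbb{Q}$ (it is a positive integer because every $\theta_j>2$).

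Finally I would construct a $\theta$ meeting all requirements. Totally real number fields of every degree $d$ exist; I fix one such $F$ with an integral generator and multiply it by a suitable positive integer to obtain a generator $\theta_0\in\mathcal{O}_F$ no two of whose conjugates differ by exactly $4$ (only finitely many scalings are forbidden). I then put $\theta:=\theta_0+N$ with $N\in\mbb{Z}$; for $N$ large all conjugates exceed $2$, and the minimal polynomial of $\theta$ is $g(x)=g_0(x-N)$, where $g_0$ is the minimal polynomial of $\theta_0$. Consequently
\begin{equation*}
D(N):=g(2)\,g(-2)=g_0(2-N)\,g_0(-2-N)
\end{equation*}
is a monic integer polynomial in $N$ of degree $2d$ whose roots are the numbers $2-\theta_0^{(j)}$ and $-2-\theta_0^{(j)}$; the choice of $\theta_0$ makes these $2d$ roots pairwise distinct, so $D$ is squarefree. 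A monic squarefree polynomial of positive even degree takes perfect-square values at only finitely many integers — an elementary comparison with the two nearest squares settles $d=1$, while for $d\geq2$ this is Siegel's theorem applied to the positive-genus curve $y^2=D(N)$. Choosing $N$ large and outside this finite exceptional set yields $\theta$ with all conjugates $>2$ and $D(N)$ a non-square, whence $K=\mbb{Q}(u)$ is totally real of degree $2d$ and admits the primitive reciprocal unit $u$.

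The main obstacle will be precisely the primitivity, i.e.\ forcing $[K:\mbb{Q}]=2d$ rather than $d$: one must exclude that $\theta^2-4$ is already a square in $F$. The norm criterion reduces this to a non-squareness statement for the single integer $g(2)g(-2)$, and the shift $\theta=\theta_0+N$ together with the squarefreeness of $D(N)$ converts it into the fact that a squarefree polynomial is a perfect square at only finitely many integer arguments; everything else (totally real, totally positive, unit, reciprocal) is immediate from the palindromic real‑rooted model $P(x)=x^d g(x+x^{-1})$.
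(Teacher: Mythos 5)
Your construction coincides with the paper's: the paper forms $Q(X)=P(X+\tfrac{1}{X}-L)X^{d}$ from the minimal polynomial $P$ of a totally real integer $\alpha$ of degree $d$, which is exactly your $x^{d}g(x+x^{-1})$ applied to the shifted generator $\theta=\alpha+L$, and all the verifications up to irreducibility (monic, palindromic, integral, $2d$ positive real roots, unit, reciprocal, totally positive) are the same. The one point where you add real substance is primitivity: the paper merely asserts that $Q$ is irreducible ``for a generic choice of $L$'' without argument, whereas you reduce irreducibility of $P$ to $\theta^{2}-4$ not being a square in $F=\mbb{Q}(\theta)$ (valid, since $\theta=u+u^{-1}\in\mbb{Q}(u)$ gives $[\mbb{Q}(u):\mbb{Q}]=[\mbb{Q}(u):F]\,[F:\mbb{Q}]$), then via the norm to the integer $g(2)g(-2)$ not being a perfect square, and finally to the finiteness of square values of the squarefree monic polynomial $D(N)=g_{0}(2-N)\,g_{0}(-2-N)$; your preliminary scaling of $\theta_{0}$ correctly guarantees that the $2d$ roots of $D$ are distinct. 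This chain is sound and turns the paper's sketch into a complete proof. One simplification worth noting: Siegel's theorem is not needed for $d\geq2$, since a monic $D\in\mbb{Z}[N]$ of even degree taking square values at infinitely many integers must be the square of a polynomial (compare $D(N)$ for large $\abs{N}$ with the squares of the integers nearest its formal polynomial square root), which squarefreeness of $D$ excludes; this keeps the whole argument elementary.
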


\begin{proof}
Let $\alpha$  be a totally real algebraic integer of degree $d$ and denote its 
minimal polynomial by $P(X)$. Let $L$ be an integer so large that for each of 
the $d$ roots $\alpha_j$, $j=1,\dotsc,d$, of $P(X)$ we have $L+\alpha_j>2$. 
Consider then the polynomial $Q(X)=P(X+\frac{1}{X}-L)X^d$. It is clear that $Q$ 
is a monic palindromic  polynomial of degree $2d$ with $2d$ real roots, since 
$(L+\alpha_j)^2>4$, $j=1,\dotsc,d$. Furthermore, for a generic choice of $L$, 
the polynomial $Q$ is irreducible and its roots are totally real reciprocal 
units of degree $2d$.
\end{proof}

From now on we suppose that there exists a primitive reciprocal unit 
$u_0\in\mathcal{O}^{*,+}_K$. In other words, we suppose that $u_0$ and 
$u_0^{-1}$ are conjugate, as well as $K=\mbb{Q}(u_0)$.

\begin{prop}\label{Prop:directsum}
Let $u_0\in\mathcal{O}^{*,+}_K$ be a primitive reciprocal unit. Then there 
exists a $\wh{\rho}(u_0)$-invariant rational decomposition 
\begin{equation*}
W_{\mbb{Q}}=(W_1)_{\mbb{Q}}\oplus(W_2)_{\mbb{Q}}
\end{equation*}
where $W_1$ is the $d$-dimensional subspace of $\wh{\rho}(u_0)$-fixed points in 
$W$.
\end{prop}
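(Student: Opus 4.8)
The plan is to realize the desired complement $W_2$ as the kernel of a rational polynomial in $\wh\rho(u_0)$; this makes the decomposition automatically rational and $\wh\rho(u_0)$-invariant, and the only thing that has to be checked is that the fixed eigenvalue is \emph{simple} in the minimal polynomial, which is where the semisimplicity supplied by Lemma~\ref{Lem:simdiagonal} enters. First I would pin down the eigenvalues on $W$. By Lemma~\ref{Lem:simdiagonal} the operator $\rho(u_0)$ is diagonalizable over the Galois closure $L$, hence so is its exterior square $\bigwedge^2\rho(u_0)=\wh\rho(u_0)|_W$, whose eigenvalues are the products $\lambda_k\lambda_l$ ($1\le k<l\le 2d$), where $\lambda_1,\dotsc,\lambda_{2d}$ are the conjugates of $u_0$. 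Since $u_0$ is a primitive reciprocal unit, its minimal polynomial is irreducible of degree $2d$ and palindromic; therefore the $\lambda_k$ are pairwise distinct and the set $\{\lambda_1,\dotsc,\lambda_{2d}\}$ is stable under $x\mapsto x^{-1}$. No $\lambda_k$ equals $1$, for otherwise $X-1$ would divide an irreducible polynomial of degree $2d\geq2$. Thus the involution $x\mapsto x^{-1}$ acts without fixed points on the $\lambda_k$, so after relabelling $\lambda_{k+d}=\lambda_k^{-1}$ for $1\le k\le d$, and among the products $\lambda_k\lambda_l$ the value $1$ is attained exactly for the $d$ pairs $(k,k+d)$, $1\le k\le d$. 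Hence $1$ is an eigenvalue of $\wh\rho(u_0)|_W$ of multiplicity $d$, which already gives $\dim_{\mbb R}W_1=d$.

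Next I would invoke rationality. As noted before the statement, $\wh\rho(u_0)$ preserves $W_{\mbb Q}$, and in the basis $(f_{k,l})$ it is given by an integral matrix (its entries are $2\times2$ minors of the integral matrix of $\rho(u_0)$), so the minimal polynomial $m(X)$ of $\wh\rho(u_0)|_W$ lies in $\mbb Q[X]$. Because this operator is semisimple (diagonalizable over $L$) and admits $1$ as an eigenvalue, $1$ is a simple root of $m$, so we may factor
\[
m(X)=(X-1)\,g(X),\qquad g\in\mbb Q[X],\quad g(1)\neq0,
\]
and $X-1$ and $g(X)$ are coprime in $\mbb Q[X]$.

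Finally, the kernel (primary) decomposition over $\mbb Q$ yields
\[
W_{\mbb Q}=\ker\bigl(\wh\rho(u_0)-\id\bigr)\oplus\ker g\bigl(\wh\rho(u_0)\bigr),
\]
where both summands are rational, being kernels of rational operators, and are $\wh\rho(u_0)$-invariant. The first summand is precisely the rational subspace $(W_1)_{\mbb Q}$ of fixed points, and setting $(W_2)_{\mbb Q}:=\ker g(\wh\rho(u_0))$ completes the construction.

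The step I expect to require the most care is the claim that the eigenvalue $1$ occurs to the first power in the minimal polynomial, i.e.\ that $\wh\rho(u_0)|_W$ is semisimple: this is exactly what guarantees the coprimality of $X-1$ and $g(X)$ and hence the \emph{directness} of the rational decomposition. It is this semisimplicity, obtained from Lemma~\ref{Lem:simdiagonal} through the fact that the exterior square of a diagonalizable operator is again diagonalizable, that does the essential work; the remaining arguments (the eigenvalue bookkeeping and the primary decomposition) are then routine.
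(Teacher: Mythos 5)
Your proof is correct and follows essentially the same route as the paper: identify the eigenvalues $\lambda_k\lambda_l$ of $\wh{\rho}(u_0)|_W$, use the palindromic/irreducible minimal polynomial of $u_0$ to see that exactly $d$ of these products equal $1$, and then split off the rational fixed subspace via a coprime-factor (primary) decomposition over $\mbb{Q}$ — the paper phrases this with the characteristic polynomial and a citation to Lang, while you use the squarefree minimal polynomial, but the argument is the same. Your extra care in checking that the involution $\lambda\mapsto\lambda^{-1}$ is fixed-point-free and that no accidental products give $1$ makes explicit a point the paper leaves implicit.
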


\begin{proof}
Since the characteristic polynomial of $\rho(u_0)$ is palindromic, we can 
arrange the eigenvalues $\lambda_1,\dotsc,\lambda_{2d}$ of $\rho(u_0)$ such 
that $\lambda_{2k}=\lambda_{2k-1}^{-1}$ for all $1\leq k\leq d$. Since 
the restriction of $\wh{\rho}(u_0)$ to $W$ is diagonalizable with 
eigenvalues $\lambda_k\lambda_l$ for $1\leq k<l\leq 2d$, we see that the 
subspace $W_1$ of $\wh{\rho}(u_0)$-fixed points is of dimension $d$. Moreover, 
it follows that the characteristic polynomial of $\wh{\rho}(u_0)|_W$ is 
divisible by $(x-1)^d$ in $\mbb{Z}[x]$. This gives the desired decomposition of 
$W_{\mbb{Q}}$ into two $\wh{\rho}(u_0)$-stable rational subspaces, 
see~\cite[Theorem~XI.4.1]{L}.
\end{proof}

Let us consider the rational Lie algebra
\begin{equation*}
\lie{n}_{\mbb{Q}}:=(\lie{f}_{2d})_{\mbb{Q}}/(W_2)_{\mbb{Q}}.
\end{equation*}
In the following we will view $\lie{n}_{\mbb{Q}}$ as $V_{\mbb{Q}}\oplus 
\bigl(W_{\mbb{Q}}/(W_2)_{\mbb{Q}}\bigr)$. Note that 
$W_{\mbb{Q}}/(W_2)_{\mbb{Q}}$ coincides with the center of $\lie{n}_{\mbb{Q}}$.

\begin{prop}\label{Prop:irreducible}
We have $\lie{n}:=\lie{n}_{\mbb{Q}}\otimes\mbb{R}\cong\lie{h}_3^d$, i.e., the 
construction yields a rational structure on $N:=H_3^d$. Moreover, this rational 
structure on $N$ is irreducible.
\end{prop}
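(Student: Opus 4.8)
The plan is to prove the two assertions separately: first the isomorphism $\lie{n}\cong\lie{h}_3^d$, and then the irreducibility of the rational structure it defines on $N=H_3^d$.

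For the isomorphism, I would begin by recording that $\lie{n}_{\mbb{Q}}=V_{\mbb{Q}}\oplus\bigl(W_{\mbb{Q}}/(W_2)_{\mbb{Q}}\bigr)$, that $\dim_{\mbb{Q}}V_{\mbb{Q}}=2d$, and that by Proposition~\ref{Prop:directsum} the quotient $W_{\mbb{Q}}/(W_2)_{\mbb{Q}}\cong(W_1)_{\mbb{Q}}$ has dimension $d$; hence $\dim\lie{n}=3d$, matching $\dim\lie{h}_3^d$. The main task is to exhibit a genuine Lie algebra isomorphism, not just an equality of dimensions. The key structural fact is that $W_1$ is the space of $\wh\rho(u_0)$-fixed vectors in $W=\bigwedge^2V$, and by the eigenvalue bookkeeping in the proof of Proposition~\ref{Prop:directsum} the fixed eigenvalues $\lambda_k\lambda_l=1$ occur precisely for the $d$ reciprocal pairs $(k,l)=(2m-1,2m)$, $1\leq m\leq d$. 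I would diagonalize $\rho(u_0)$ over $L$ (Lemma~\ref{Lem:simdiagonal}) to obtain an eigenbasis $v_1,\dotsc,v_{2d}$ of $V$ with $\rho(u_0)v_j=\lambda_jv_j$, so that $v_{2m-1}\wedge v_{2m}$ spans the fixed line corresponding to $\lambda_{2m-1}\lambda_{2m}=1$. The crucial point is that in the quotient $\lie{n}$, all the brackets $[v_k,v_l]$ with $(k,l)$ \emph{not} a reciprocal pair map to zero (they lie in $W_2$), while $[v_{2m-1},v_{2m}]$ survives and spans the $m$-th central line. Thus setting $z_m:=[v_{2m-1},v_{2m}]$, the elements $v_{2m-1},v_{2m},z_m$ for $1\leq m\leq d$ satisfy exactly the Heisenberg relations $[v_{2m-1},v_{2m}]=z_m$ with all cross-brackets between distinct blocks vanishing, which is the presentation of $\lie{h}_3^d$. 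I would note that although this diagonalizing basis lives over $L$, the decomposition and the resulting isomorphism type are determined over $\mbb{R}$, giving $\lie{n}\cong\lie{h}_3^d$.

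For irreducibility, I would invoke the Remark following Section~\ref{Section:NilpotentLattices}, which translates irreducibility of the rational structure into the statement that the associated cocompact discrete subgroup is not commensurable to a direct product of two non-trivial normal subgroups; equivalently, $\lie{n}_{\mbb{Q}}$ does not split as a direct sum of two non-trivial rational ideals. The strategy is a Galois-descent argument: any rational ideal is in particular $\wh\rho(u_0)$-invariant, hence is a sum of eigenspaces of $\wh\rho(u_0)$, and the generating field $K=\mbb{Q}(u_0)$ forces the Galois group of $L/\mbb{Q}$ to permute the eigenvalues $\lambda_1,\dotsc,\lambda_{2d}$ of $\rho(u_0)$ on $V$ transitively (since $u_0$ is primitive, its minimal polynomial is irreducible of degree $2d$). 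The key observation is that a rational—equivalently, Galois-stable—decomposition of $V_{\mbb{Q}}$ is impossible: a proper nonzero $\wh\rho(u_0)$-invariant rational subspace of $V_{\mbb{Q}}$ would correspond to a proper nonempty Galois-stable subset of the eigenvalues, contradicting transitivity. Consequently any rational ideal of $\lie{n}_{\mbb{Q}}$ must contain all of $V_{\mbb{Q}}$ or meet $V_{\mbb{Q}}$ trivially; since $V_{\mbb{Q}}$ generates $\lie{n}_{\mbb{Q}}$ as a Lie algebra, the only ideal containing it is the whole algebra, and the only way to realize a nontrivial direct summand would be through the central part alone, which cannot be an ideal complement because the center is generated by brackets of $V_{\mbb{Q}}$.

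The hard part is the irreducibility argument, specifically ruling out a splitting that is not compatible with the $V_{\mbb{Q}}\oplus\mathcal{Z}$ grading. I expect the main obstacle to be showing that no rational ideal $\lie{a}$ can project isomorphically onto a proper piece of $V_{\mbb{Q}}$ while carrying along a matching piece of the center: one must verify that if $\lie{n}_{\mbb{Q}}=\lie{a}\oplus\lie{b}$ with both factors ideals, then each factor is a sum of Heisenberg blocks, and then argue that the Galois action on the \emph{original} field mixes the blocks so that no proper nonempty subset of blocks is rational. Here I would exploit that the central eigenvalues $\lambda_{2m-1}\lambda_{2m}=1$ are all equal, so the Galois group cannot be distinguished by its action on the surviving center alone; the separation must come from the transitive action on $V$, and the compatibility of the bracket $V_{\mbb{Q}}\times V_{\mbb{Q}}\to W_{\mbb{Q}}/(W_2)_{\mbb{Q}}$ with this action is what prevents any block-respecting rational splitting. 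Concluding, no nontrivial rational ideal decomposition exists, so the rational structure is irreducible.
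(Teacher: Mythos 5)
Your first half is essentially the paper's own argument: diagonalize $\rho(u_0)$ over the real eigenbasis $v_1,\dotsc,v_{2d}$ arranged in reciprocal pairs, observe that the products $\lambda_k\lambda_l$ equal $1$ exactly for the $d$ pairs $(2m-1,2m)$, so that modulo $W_2$ only the brackets $w_m=[v_{2m-1},v_{2m}]$ survive and the resulting $\lie{h}_3$-blocks commute. That part is fine.

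The irreducibility part has a genuine gap. You assert that ``any rational ideal is in particular $\wh{\rho}(u_0)$-invariant, hence is a sum of eigenspaces of $\wh{\rho}(u_0)$.'' This is false: an ideal of a Lie algebra need not be stable under a given automorphism. Already in $\lie{h}_3$ every plane containing the center is an ideal, but among these only the planes spanned by the center and an eigenvector of $\rho(u_0)$ are $\wh{\rho}(u_0)$-invariant. So your dichotomy (``any rational ideal must contain all of $V_{\mbb{Q}}$ or meet $V_{\mbb{Q}}$ trivially'') is not established, and in your last paragraph you correctly identify --- but do not close --- the real issue, namely showing that each summand of a putative decomposition $\lie{n}_{\mbb{Q}}=\lie{a}_{\mbb{Q}}\oplus\lie{b}_{\mbb{Q}}$ is supported on a subset of the Heisenberg blocks. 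The missing step, which is exactly what the paper supplies, is an elementary bracket computation: if $\xi=\sum_j(\kappa_jv_{2j-1}+\mu_jv_{2j}+\nu_jw_j)$ lies in an ideal with $(\kappa_{j_0},\mu_{j_0})\neq(0,0)$, then bracketing $\xi$ with $v_{2j_0}$ and with $v_{2j_0-1}$ (all cross-block brackets die in $W_2$) forces $w_{j_0}$ into that ideal. Since $\lie{a}\cap\lie{b}=0$, the sets of blocks ``seen'' by $\lie{a}$ and by $\lie{b}$ are then disjoint and cover $\{1,\dotsc,d\}$, so $\pi(\lie{a})$ and $\pi(\lie{b})$ are complementary sums of eigenplanes, rational and $\rho(u_0)$-invariant. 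Only at that point does your (correct) endgame --- irreducibility over $\mbb{Q}$ of the characteristic polynomial of $\rho(u_0)$, i.e.\ Galois transitivity on the $\lambda_k$ --- apply and give the contradiction. With that computation inserted, your plan coincides with the paper's proof; without it, the argument does not go through.
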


\begin{proof}
Let $(v_1,\dotsc,v_{2d})$ be a basis of $V$ such that $\rho(u_0)v_{2j-1}= 
\lambda_j v_{2j-1}$ and $\rho(u_0)v_{2j}=\lambda_j^{-1}v_{2j}$ for all $1\leq 
j\leq d$. Set $w_{j}:= [v_{2j-1},v_{2j}], \ j=1,\dotsc,d$. Then
\begin{equation*}
\mbb{R}v_{2j-1}\oplus \mbb{R}v_{2j}\oplus\mbb{R}w_{j},\ j=1,\dotsc,d,
\end{equation*}
is a subalgebra of $\lie{f}_{2d}$ isomorphic to $\lie{h}_3$ which intersects 
$W_2$ trivially. Using the decomposition established in 
Proposition~\ref{Prop:directsum}, one sees that these subalgebras commute 
pairwise modulo $W_2$, which proves the first claim.

In order to show that the rational structure on $N$ is irreducible, suppose 
that we have a decomposition $\lie{n}_{\mbb{Q}}=\lie{a}_{\mbb{Q}} 
\oplus\lie{b}_{\mbb{Q}}$ where $\lie{a}_{\mbb{Q}}$ and $\lie{b}_{\mbb{Q}}$ are 
non-trivial ideals of $\lie{n}_{\mbb{Q}}$. Then we have 
$\lie{n}=\lie{h}_3^d=\lie{a}\oplus\lie{b}$ for 
$\lie{a}=\lie{a}_{\mbb{Q}}\otimes_{\mbb{Q}}\mbb{R}$ and $\lie{b}= 
\lie{b}_{\mbb{Q}}\otimes_{\mbb{Q}}\mbb{R}$.

If an ideal of $\lie{n}$ contains an element of the form
\begin{equation*}
\xi=\sum_{j=1}^d\bigr(\kappa_{j}v_{2j-1}+\mu_j v_{2j}+\nu_j w_{j}
\bigr)
\end{equation*}
with $(\kappa_{j_{0}},\mu_{j_0})\not=(0,0)$, then this ideal contains also 
$w_{j_{0}}$. Define
\begin{equation*}
J_{\lie{a}}:=\left\{1\leq j_{0}\leq d;\ \exists\ 
\xi=\sum_{j=1}^d\bigr(\kappa_{j}v_{2j-1}+\mu_j v_{2j}+\nu_j 
w_{j}\bigr)\in\lie{a}\text{ with }(\kappa_{j_{0}},\mu_{j_{0}})\not=(0,0) \right\}
\end{equation*}
and similarly $J_{\lie{b}}$. Since $\lie{n}=\lie{a}\oplus\lie{b}$, the preceding 
observation implies that $J_{\lie{a}}\cup J_{\lie{b}}=\{1,\dotsc,d\}$ and that 
this union is disjoint. Therefore, we can suppose that $J_{\lie{a}}=\{1,\dotsc, 
k\}$ and hence get
\begin{equation*}
\pi(\lie{a})=\bigoplus_{j=1}^k(\mbb{R}v_{2j-1}\oplus\mbb{R}v_{2j})\quad
\text{and}\quad
\pi(\lie{b})=\bigoplus_{j=k+1}^d(\mbb{R}v_{2j-1}\oplus\mbb{R}v_{2j}),
\end{equation*}
where $\pi\colon\lie{h}_3^d\to V$ is the projection along the center of 
$\lie{h}_3^d$.

Since $\pi(\lie{a})$ and $\pi(\lie{b})$ are invariant under $\rho(u_0)$ and 
since $\pi$ is defined over $\mbb{Q}$, we obtain the corresponding rational  
decomposition $V_{\mbb{Q}}=\pi(\lie{a}_{\mbb{Q}})\oplus\pi(\lie{b}_{\mbb{Q}})$ 
into two rational $\rho(u_0)$-invariant subspaces. Now the claim follows 
from the fact that the characteristic polynomial of $\rho(u_0)$ is irreducible 
over $\mbb{Q}$. 
\end{proof}

As explained in Section~\ref{Section:NilpotentLattices} we can now define a 
cocompact discrete subgroup of $N$ as follows.

\begin{defn}
Let $\wh{\Lambda}_K\subset V\oplus   W/W_{2}$ be the full lattice generated by 
$\sigma(\omega_k)\in V$ for $1\leq k\leq 2d$ and the images of 
$f_{kl}=\bigl[\sigma(\omega_k),\sigma(\omega_l)\bigr], k<l$ in 
$W_{\mbb{Q}}/(W_2)_{\mbb{Q}}$ for $1\leq k<l\leq 2d$. We define $\Gamma_N$ to be 
the discrete cocompact subgroup generated by $\exp(\wh{\Lambda}_K)\subset N$.
\end{defn}

\begin{rem}
Due to Proposition~\ref{Prop:irreducible}, the group $\Gamma_N$ is not 
commensurable to the product of two proper normal subgroups.
\end{rem}

 \subsection{Solv-manifolds associated with totally real 
number fields} \label{Subs:CocompactDiscreteSubgroups}

In this subsection, we construct an extension of $N$ by an Abelian group which 
admits a cocompact discrete subgroup containing $\Gamma_N$.

Recall that $\mathcal{O}_K^{*,+}$ can be considered as a discrete Abelian 
subgroup of ${\rm{SL}}(2d,\mbb{R})$ which leaves the lattice $\Lambda_K\subset 
V_{\mbb{Q}}$ invariant. Moreover, its action extends to $V\oplus W$ leaving
$W_{\mbb{Q}}$ invariant.

Now we have
\begin{prop}
The group $\mathcal{O}_K^{*,+}$ respects the decomposition $W_{\mbb{Q}}= 
(W_1)_{\mbb{Q}}\oplus(W_2)_{\mbb{Q}}$. Consequently, every 
$u\in\mathcal{O}_K^{*,+}$ acts on $\lie{n}_{\mbb{Q}}$ by an automorphism 
$\rho_{\lie{n}}(u)$.
\end{prop}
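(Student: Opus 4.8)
The plan is to reduce everything to the commutativity of the abelian group $\mathcal{O}_K^{*,+}$ together with the observation that $(W_1)_{\mbb{Q}}$ and $(W_2)_{\mbb{Q}}$ are spectral subspaces of the single operator $\wh{\rho}(u_0)$, hence are preserved by any operator that commutes with $\wh{\rho}(u_0)$.

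First I would record a rational description of the two summands. By Proposition~\ref{Prop:directsum} the space $W_1$ consists of the $\wh{\rho}(u_0)$-fixed vectors, so that $(W_1)_{\mbb{Q}}=\ker\bigl(\wh{\rho}(u_0)-\id\bigr)$ inside $W_{\mbb{Q}}$; this is a rational subspace because the matrix of $\wh{\rho}(u_0)$ in the basis $\wt{\mathcal{B}}$ is integral. By Lemma~\ref{Lem:simdiagonal} the operator $\wh{\rho}(u_0)|_W$ is diagonalizable over the Galois closure $L$, so the eigenvalue $1$ has multiplicity exactly $d=\dim W_1$; writing the characteristic polynomial as $(x-1)^d q(x)$ with $q(1)\neq 0$ and $q\in\mbb{Z}[x]$, the primary decomposition furnished by Proposition~\ref{Prop:directsum} identifies the complementary summand as $(W_2)_{\mbb{Q}}=\ker q\bigl(\wh{\rho}(u_0)\bigr)$ inside $W_{\mbb{Q}}$. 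Thus both summands are kernels of polynomials in $\wh{\rho}(u_0)$ with rational coefficients.

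Next I would fix an arbitrary $u\in\mathcal{O}_K^{*,+}$ and use that $\rho$ is a representation of an abelian group, so $\rho(u)\rho(u_0)=\rho(u_0)\rho(u)$; applying the exterior-square functor, the induced operators satisfy $\wh{\rho}(u)\wh{\rho}(u_0)=\wh{\rho}(u_0)\wh{\rho}(u)$ on $W$, and both restrict to rational operators on $W_{\mbb{Q}}$. Since an operator commuting with $\wh{\rho}(u_0)$ maps the kernel of any polynomial in $\wh{\rho}(u_0)$ into itself, $\wh{\rho}(u)$ preserves $(W_1)_{\mbb{Q}}$ and $(W_2)_{\mbb{Q}}$. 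This is exactly the statement that $\mathcal{O}_K^{*,+}$ respects the decomposition $W_{\mbb{Q}}=(W_1)_{\mbb{Q}}\oplus(W_2)_{\mbb{Q}}$.

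For the consequence, I note that $(W_2)_{\mbb{Q}}$ is a central ideal of $(\lie{f}_{2d})_{\mbb{Q}}$ which we just showed is left invariant by the Lie algebra automorphism $\wh{\rho}(u)$. Hence $\wh{\rho}(u)$ descends to the quotient $\lie{n}_{\mbb{Q}}=(\lie{f}_{2d})_{\mbb{Q}}/(W_2)_{\mbb{Q}}$, and this induced map is the desired automorphism $\rho_{\lie{n}}(u)$. The argument is almost entirely formal; the only genuine input is the diagonalizability of Lemma~\ref{Lem:simdiagonal}, which is what guarantees that the eigenvalue $1$ occurs with multiplicity exactly $d$ and hence that $q(1)\neq 0$, so that $(W_1)_{\mbb{Q}}$ and $(W_2)_{\mbb{Q}}$ really are the complementary kernels singled out above. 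I therefore expect no serious obstacle beyond keeping track of rationality.
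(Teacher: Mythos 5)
Your argument is correct, and it reaches the conclusion by a slightly different mechanism than the paper. The paper's proof invokes Lemma~\ref{Lem:simdiagonal} for the \emph{whole family}: all $\rho(u)$, hence all $\wh{\rho}(u)$, are simultaneously diagonalizable over the Galois closure, and since $W_1$ and $W_2$ are direct sums of $\wh{\rho}(u_0)$-eigenspaces (the eigenspace for $1$ and the sum of the others, respectively), they are unions of common eigenlines and therefore preserved by every $\wh{\rho}(u)$. You instead characterize $(W_1)_{\mbb{Q}}=\ker\bigl(\wh{\rho}(u_0)-\id\bigr)$ and $(W_2)_{\mbb{Q}}=\ker q\bigl(\wh{\rho}(u_0)\bigr)$ as kernels of rational polynomials in the single operator $\wh{\rho}(u_0)$, and then use only that each $\wh{\rho}(u)$ commutes with $\wh{\rho}(u_0)$ (automatic, since $\rho$ is a representation of an abelian group and $\bigwedge^2$ is functorial). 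Your route needs diagonalizability only of $\wh{\rho}(u_0)|_W$ itself, not of the whole family, and it makes the rationality of the invariance statement transparent since everything happens over $\mbb{Q}$ rather than over $L$; the paper's version is shorter given that Lemma~\ref{Lem:simdiagonal} is already on the table and is reused elsewhere. Your identification of $(W_2)_{\mbb{Q}}$ with $\ker q\bigl(\wh{\rho}(u_0)\bigr)$ does match what Proposition~\ref{Prop:directsum} actually constructs (via the primary decomposition from Lang), and the final descent to $\lie{n}_{\mbb{Q}}=(\lie{f}_{2d})_{\mbb{Q}}/(W_2)_{\mbb{Q}}$ is exactly as in the paper, so there is no gap.
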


\begin{proof}
Due to Lemma~\ref{Lem:simdiagonal}, the transformations $\rho(u)\in \End(V)$ 
with $u\in\mathcal{O}^{*,+}_K$ are simultaneously diagonalizable. Thus the 
same holds for the transformations $\wh{\rho}(u)\in{\rm{GL}}(V\oplus W)$. 
The claim follows from the fact that $W_1$ and $W_2$ are direct sums of 
eigenspaces for $\wh{\rho}(u_0)$. In fact, $W_1$ is the eigenspace corresponding 
to the eigenvalue $1$ and $W_2$ is a direct sum of eigenspaces with eigenvalues 
not equal to $1$. 
\end{proof}

Since $\mathcal{O}_K^{*,+}$ acts on $\lie{n}_{\mbb{Q}}$ by automorphisms, it 
respects the decomposition $\lie{n}_{\mbb{Q}}=V_{\mbb{Q}}\oplus 
\mathcal{Z}(\lie{n}_{\mbb{Q}})$ where $\mathcal{Z}(\lie{n}_{\mbb{Q}})\cong 
(W_1)_{\mbb{Q}}$ as rational vector spaces. Consider the homomorphism
\begin{equation*}
\psi\colon\mathcal{O}_K^{*,+}\to{\rm{SL}}\bigl(\mathcal{Z}(
\lie{n}_{\mbb{Q}})\bigr),\quad 
\psi(u):=\rho_{\lie{n}}(u)|_{\mathcal{Z}(\lie{n}_{\mbb{Q}})}.
\end{equation*}
Note that by construction we have $u_0\in\ker(\psi)$.

\begin{prop}
The subgroup $\ker(\psi)\subset\mathcal{O}^{*,+}_K$ has rank $d$, hence 
is isomorphic to $\mbb{Z}^d$, and consists of reciprocal units in 
$\mathcal{O}^{*,+}_K$.
\end{prop}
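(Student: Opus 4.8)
The plan is to give an explicit arithmetic description of $\Gamma_A$ and then read off both assertions from it. By Lemma~\ref{Lem:simdiagonal} the endomorphisms $\rho(u)$, $u\in\mathcal{O}_K^{*,+}$, are simultaneously diagonalizable over $L$: identifying $V_{\mbb{Q}}$ with $K$ and $V\otimes_{\mbb{Q}}L$ with $L^{2d}$ through the embeddings, there is a common eigenbasis $v_1,\dotsc,v_{2d}$ with $\rho(u)v_k=\sigma_k(u)v_k$ for all $u$ and $k$, labelled as in Proposition~\ref{Prop:directsum} so that $\sigma_{2j}(u_0)=\sigma_{2j-1}(u_0)^{-1}$. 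Then $W_1\otimes L$ is spanned by the vectors $v_{2j-1}\wedge v_{2j}$, $1\leq j\leq d$, and $\wh{\rho}(u)$ scales $v_{2j-1}\wedge v_{2j}$ by $\sigma_{2j-1}(u)\sigma_{2j}(u)$. Hence $\varphi(u)=\id$ if and only if $\sigma_{2j-1}(u)\sigma_{2j}(u)=1$ for every $j$, so
\begin{equation*}
\Gamma_A=\bigl\{u\in\mathcal{O}_K^{*,+};\ \sigma_{2j-1}(u)\sigma_{2j}(u)=1\text{ for }1\leq j\leq d\bigr\}.
\end{equation*}

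The crucial step is to convert this into a single field-theoretic condition using reciprocity of $u_0$. Since $u_0$ is reciprocal and primitive, $u_0$ and $u_0^{-1}$ share the same irreducible minimal polynomial and generate the same field $\mbb{Q}(u_0)=\mbb{Q}(u_0^{-1})=K$; therefore $u_0\mapsto u_0^{-1}$ extends to a field automorphism $\theta$ of $K$, which is an involution because $K=\mbb{Q}(u_0)$. Composition with $\theta$ permutes the embeddings, $\sigma_k\circ\theta=\sigma_{\pi(k)}$, and evaluating at $u_0$ identifies $\pi$ with the pairing $2j-1\leftrightarrow 2j$. Thus $\sigma_{2j}(u)=\sigma_{2j-1}(\theta(u))$ for all $u$, and the defining condition of $\Gamma_A$ becomes $\sigma_{2j-1}(u\theta(u))=1$ for all $j$; applying $\theta$ propagates this to $\sigma_k(u\theta(u))=1$ for every $k$, forcing $u\theta(u)=1$. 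We conclude
\begin{equation*}
\Gamma_A=\bigl\{u\in\mathcal{O}_K^{*,+};\ \theta(u)=u^{-1}\bigr\}.
\end{equation*}

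Both claims now follow. If $\theta(u)=u^{-1}$, then $u^{-1}=\theta(u)$ is another root of the irreducible minimal polynomial of $u$, hence conjugate to $u$; this is exactly the assertion that every element of $\Gamma_A$ is reciprocal. For the rank, let $\ell(u)=(\log\sigma_k(u))_{k}$ embed $\mathcal{O}_K^{*,+}$ by Dirichlet's theorem as a lattice $\Lambda$ of rank $2d-1$ in the hyperplane $H=\{x\in\mbb{R}^{2d};\ \sum_k x_k=0\}$. Under $\ell$ the involution $\theta$ becomes the coordinate permutation $\pi^{*}$, which preserves $\Lambda$ since $\theta$ preserves $\mathcal{O}_K^{*,+}$, and the condition $\theta(u)=u^{-1}$ reads $\pi^{*}\ell(u)=-\ell(u)$. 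Hence $\ell$ maps $\Gamma_A$ isomorphically onto $\Lambda_-:=\{x\in\Lambda;\ \pi^{*}x=-x\}$. From $2\Lambda\subseteq\Lambda_+\oplus\Lambda_-$ we get $\rk\Lambda_++\rk\Lambda_-=2d-1$, while $\Lambda_\pm$ lie in the $\pm1$-eigenspaces $H_\pm$ of $\pi^{*}$ with $\dim H_+=d-1$ and $\dim H_-=d$; therefore $\rk\Lambda_-=d$ and $\Gamma_A\cong\mbb{Z}^d$. The main obstacle is the passage carried out in the second paragraph: extracting from reciprocity the involution $\theta$ and matching the permutation it induces on the embeddings with the pairing that defines $W_1$. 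Once $\Gamma_A=\{u;\ \theta(u)=u^{-1}\}$ is in hand, the two conclusions are routine.
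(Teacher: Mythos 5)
Your proof is correct, but it takes a noticeably different route from the paper's for both halves of the statement. You both start from the same characterization $\Gamma_A=\{u;\ \sigma_{2j-1}(u)\sigma_{2j}(u)=1 \text{ for all } j\}$, read off from the eigenvalues of $\wh{\rho}(u)$ on $W_1$. From there the paper stays on the Lie-group side: it extends $\varphi$ to a homomorphism $\wh{\varphi}\colon A\to{\rm{SL}}(d,\mbb{R})$ of the diagonal torus, gets the lower bound $\rk\Gamma_A\geq d$ from the fact that the image of $\varphi$ is a discrete subgroup of a $(d-1)$-dimensional torus, and the upper bound from $\Gamma_A$ being discrete in $\ker\wh{\varphi}\cong\mbb{R}^d$, computed via the derivative $(x_k)\mapsto(x_1+x_2,\dotsc,x_{2d-1}+x_{2d})$ on the trace-zero hyperplane. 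You instead extract from reciprocity of the primitive unit $u_0$ the field involution $\theta$ with $\theta(u_0)=u_0^{-1}$, match the permutation it induces on the embeddings with the pairing $2j-1\leftrightarrow 2j$, and rewrite $\Gamma_A=\{u;\ \theta(u)=u^{-1}\}$; the rank then falls out of the eigenlattice decomposition $2\Lambda\subseteq\Lambda_+\oplus\Lambda_-$ of the Dirichlet logarithmic lattice under the induced coordinate swap. The underlying dimension count is the same ($H_-$ is exactly $\ker$ of the paper's derivative map), but your packaging buys something the paper's proof does not spell out at all: the assertion that $\Gamma_A$ consists of reciprocal units, which the paper leaves implicit, is immediate from $\theta(u)=u^{-1}$ since $\theta(u)$ is a conjugate of $u$. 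Your lattice argument also sidesteps the small point the paper relies on, namely that the image of $\varphi$ is discrete (not merely a subgroup of $\mbb{R}^{d-1}$) in order to bound its rank by $d-1$. Both arguments are sound.
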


\begin{proof}
Let $T_n$ denote the group of diagonal matrices in ${\rm{SL}}(n,\mbb{R})$
having strictly positive entries.

Due to Lemma~\ref{Lem:simdiagonal}, there exists an element
$g_0\in{\rm{SL}}(2d,\mbb{R})$ such that $g_0\mathcal{O}^{*,+}_Kg_0^{-1}$
is a discrete subgroup of $T_{2d}$. Every element $g_0^{-1}\diag(t_1,\dotsc,t_{2d})g_0$ of $\wt{A}:=g_0^{-1}T_{2d}g_0$ induces a
linear transformation on $W_1$ that is again diagonalizable with 
eigenvalues $(t_1t_2,t_3t_4,\dotsc,t_{2d-1}t_{2d})$. This yields a 
homomorphism  $\wh{\psi}\colon\wt{A}\to{\rm{SL}}(d,\mbb{R})$ which 
extends $\psi$ and whose image is conjugate to a subgroup of $T_d$.

According to Dirichlet's theorem, we can view $\psi$ as a homomorphism
from $\mbb{Z}^{2d-1}$ to ${\rm{SL}}(d,\mbb{Z})$. In particular, the image of
$\psi$ is conjugate to a \emph{discrete} subgroup of $T_d$ and therefore
has rank at most $d-1$. This implies that the rank of $\ker(\psi)$ is at
least $d$.

The Lie algebra $\wt{\lie{a}}$ of $\wt{A}$ is conjugate to the set of
trace zero diagonal matrices, hence $\wt{\lie{a}}\cong\mbb{R}^{2d-1}$. The 
derivative of $\wh{\psi}\colon\wt{A}\to{\rm{SL}}(d,\mbb{R})$ can be
identified with the map $\mbb{R}^{2d-1}\to\mbb{R}^{d-1}$ given by
\begin{equation*}
(x_1,\dotsc, x_{2d})\mapsto(x_1+x_2, 
x_3+x_4, \dotsc,x_{2d-1}+x_{2d}),
\end{equation*}
where we suppose that $x_1+\dotsb+x_{2d}=0$. Since this map is 
surjective, its kernel is isomorphic to $\mbb{R}^d$, which implies that
the \emph{discrete} subgroup $\ker(\psi)\subset A:=\ker(\wh{\psi})$
is of rank at most $d$.
\end{proof}
 
Let us summarize our construction. We have seen that we can view 
$\mathcal{O}^{*,+}_K\cong\mbb{Z}^{2d-1}$ as a discrete subgroup of 
${\rm{SL}}(2d,\mbb{R})$ that normalizes $\Gamma_N$. The identity component
of its real Zariski closure is $\wt{A}\cong(\mbb{R}^{>0})^{2d-1}$ in 
${\rm{SL}}(2,\mbb{R})$, the elements of which are simultaneously 
diagonalizable. Moreover, the identity component of the real Zariski 
closure of $\Gamma_A:=\ker(\psi)\cong\mbb{Z}^d$ is
\begin{equation*}
A\cong\bigl\{(a_1,b_1,\dotsc,a_d,b_d)\in(\mbb{R}^{>0})^{2d};\ 
a_1b_1=\dotsb=a_d b_d=1\bigr\}\cong(\mbb{R}^{>0})^d.
\end{equation*}
Consequently, $\Gamma_A$ acts on $\Gamma_N$ and we obtain the solvable
discrete subgroup $\Gamma:=\Gamma_A\ltimes\Gamma_N$ which is cocompact in
$A\ltimes N\cong(\mbb{R}^{>0})^d\ltimes N$ and Zariski dense in 
$(\mbb{R}^*)^d\ltimes N$. 

Since for $a,b \in \mbb R^{>0}$ and $x,y,z \in\mbb R$ we have
\begin{equation*}
\begin{pmatrix}ab & 0 & 0 \\0 & b & 0 \\0 & 0 & 1\end{pmatrix}
\begin{pmatrix}1 & x & z \\0 & 1 & y \\0 & 0 & 1\end{pmatrix}
\begin{pmatrix}a^{-1}b^{-1} &0 & 0 \\0 & b^{-1} & 0 \\0 & 0 & 1 \end{pmatrix}=
\begin{pmatrix}1 & ax & abz \\0 & 1 & by \\0 & 0 & 1\end{pmatrix},
\end{equation*}
one can realise the Lie group ${\wt G}:={\wt A}\ltimes N\cong 
(\mbb{R}^{>0})^{2d-1} \ltimes N$ as a matrix group isomorphic to
\begin{equation*}
\left\{ \left.\begin{pmatrix}
M_{1 } & 0 & \cdots & 0 \\
0& M_{2 } & \cdots & 0\\
\vdots  & \vdots  & \ddots & \vdots  \\
0& 0& \cdots & M_{d} 
\end{pmatrix}   \right| \   M_{i}=
 \begin{pmatrix}
a_{i} b_{i}&x_{i}&z_{i}\\0&b_{i}&y_{i}\\0&0&1
\end{pmatrix},\ a_{i},b_{i}\in\mbb{R}^{>0},x_{i},y_{i},z_{i}\in\mbb{R}, \
i=1,\dotsc,d  \right\}.
\end{equation*} 
Under this isomorphism  the group  $\mathcal{O}^{*,+}_K $ corresponds to
\begin{equation*}
\left\{\left. \begin{pmatrix}
D_{1 } & 0 & \cdots & 0 \\
0& D_{2 } & \cdots & 0\\
\vdots  & \vdots  & \ddots & \vdots  \\
0& 0& \cdots & D_{d} 
\end{pmatrix}   \right| \   D_{i}=
 \begin{pmatrix}
\sigma_{2i-1}(u) \sigma_{2i}(u)&0&0\\0&\sigma_{2i}(u)&0\\0&0&1
\end{pmatrix},  \ i=1,\dotsc,d, \ u \in  \mathcal{O}^{*,+}_K 
 \right\}.
\end{equation*}
Furthermore $G:=A\ltimes N\cong(\mbb{R}^{>0})^d\ltimes N$ is the subgroup of
${\wt G} :={\wt A}\ltimes N   $ given by the equations  $a_{i} b_{i}=1,   \ i 
= 1,\dotsc,d$   
and the subgroup $\Gamma_{A} \subset \mathcal{O}^{*,+}_K$
corresponds to
\begin{equation}\label{Eqn:MultiplicativeAction}
\left\{ \left.\begin{pmatrix}
D_{1 } & 0 & \cdots & 0 \\
0& D_{2 } & \cdots & 0\\
\vdots  & \vdots  & \ddots & \vdots  \\
0& 0& \cdots & D_{d} 
\end{pmatrix}   \right| \   D_{i}=
 \begin{pmatrix}
1&0&0\\0&\sigma_{2i}(u)&0\\0&0&1
\end{pmatrix},  \ i=1,\dotsc,d, \ u \in  A
 \right\}.
\end{equation}

\subsection{Left-invariant complex structure on $G$}

The matrix group 
\begin{equation*}
S:=\mbb{R}^{>0}\ltimes H_3=\left\{\left.
\begin{pmatrix}1 & x & v \\0 & b &  y \\0 & 0 & 1\end{pmatrix}\right|\  
b\in \mbb R^{>0}, \ x,y,v \in \mbb R \right\}
\end{equation*}
acts linearly on $\mbb{C}^3$. The affine hyperplane $\mbb{C}^2\times\{1\}$ of 
$\mbb{C}^3$ is invariant under $S$. A direct calculation shows that the orbit 
through the point $z=(0,i,1)$ is open, has trivial isotropy and coincides with 
$\mbb{C}\times\mbb{H}^+\times\{1\}$ where $\mbb{H}^+\subset \mbb{C}$ is the 
upper half plane.

This proves the following result.

\begin{prop}
The solvable real Lie group $S$ admits a left-invariant complex structure with 
respect to which it is biholomorphic to $\mbb{C}\times\mbb{H}^+$.
\end{prop}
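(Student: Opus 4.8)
The plan is to realise $S$ as the open orbit through $z=(0,i,1)$ and to transport the standard complex structure of $\mbb{C}^2$ to $S$ along the orbit map. Concretely, I would define $\Phi\colon S\to\mbb{C}^2\times\{1\}\cong\mbb{C}^2$ by $\Phi(g)=g\cdot z$. Writing a general element of $S$ with parameters $b>0$ and $x,y,v\in\mbb{R}$, the matrix computation gives $\Phi(g)=(v+ix,\,y+ib)$, so $\Phi$ sends $S$ bijectively onto $\mbb{C}\times\mbb{H}^+$, with inverse $v=\re w_1$, $x=\im w_1$, $y=\re w_2$, $b=\im w_2>0$. By the computation recorded just before the statement this orbit is open and the isotropy is trivial, so $\Phi$ is a diffeomorphism onto $\mbb{C}\times\mbb{H}^+$.

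I would then define the complex structure $J$ on $S$ to be the pullback $J:=\Phi^{*}J_0$ of the standard complex structure $J_0$ of $\mbb{C}^2$. By construction $\Phi$ is a biholomorphism from $(S,J)$ onto the open set $\mbb{C}\times\mbb{H}^+$, and integrability of $J$ is inherited from that of $J_0$. Hence the only substantive point that remains is to check that $J$ is \emph{left-invariant}.

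For left-invariance I would exploit that $\Phi$ intertwines left translation on $S$ with the linear $S$-action on $\mbb{C}^3$: for $h\in S$ one has $\Phi(hg)=h\cdot\Phi(g)$, i.e.\ $\Phi\circ L_h=\tau_h\circ\Phi$, where $\tau_h$ is the restriction to the orbit of the action of $h$ on the invariant hyperplane $\mbb{C}^2\times\{1\}$. Because $S\subset\mathrm{GL}(3,\mbb{R})\subset\mathrm{GL}(3,\mbb{C})$ acts $\mbb{C}$-linearly on $\mbb{C}^3$ and fixes the last coordinate, each $\tau_h$ is the restriction of a complex-affine map; in the coordinates $(w_1,w_2)$ it reads $\tau_h(w_1,w_2)=(w_1+x_h w_2+v_h,\ b_h w_2+y_h)$, which is holomorphic. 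Consequently every $L_h=\Phi^{-1}\circ\tau_h\circ\Phi$ is $J$-holomorphic, and this is precisely the assertion that $J$ is left-invariant.

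The computations above are routine; the conceptual heart of the argument — and the only place where anything genuinely has to be verified — is the observation that, after identifying $S$ with its orbit via $\Phi$, left multiplication in the group is converted into the ambient linear action $\tau_h$. The holomorphy of these maps is forced by the $\mbb{C}$-linearity of the $S$-action on $\mbb{C}^3$ together with the invariance of the hyperplane, so no real obstacle arises; one need only keep straight the order of composition so that $\Phi\circ L_h=\tau_h\circ\Phi$ comes out with the linear action on the correct side.
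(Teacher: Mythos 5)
Your proposal is correct and follows exactly the paper's argument: the paper also identifies $S$ with the open orbit $\mbb{C}\times\mbb{H}^+\times\{1\}$ through $(0,i,1)$ under the linear $S$-action on $\mbb{C}^3$ and transports the complex structure, with left-invariance coming for free from the $\mbb{C}$-linearity of that action. You have merely written out the details (the explicit orbit map $g\mapsto(v+ix,\,y+ib)$ and the holomorphy of the translated maps $\tau_h$) that the paper leaves as ``a direct calculation.''
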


Consider now the natural action of ${\wt G}$ on $\mbb C^{3d}$. The orbit of the 
subgroup $G$ through the point $(0,i,1,0,i,1,\dotsc,0,i,1)\in\mbb{C}^{3d}$ has 
trivial isotropy group, is biholomorphic to $(\mbb C \times \mbb{H}^+)^{d}$ 
and hence gives a left-invariant complex structure on the real Lie group $G$.

Therefore, the left quotient $X:=\Gamma\backslash G$ is a compact complex 
manifold.

\subsection{A density property of $\Gamma_N$}

We continue to consider $N\cong H_3^d$ and $G\cong S^d$ as matrix groups 
consisting of block diagonal matrices as written down in the closing of 
Section~\ref{Subs:CocompactDiscreteSubgroups}. 

Let 
\begin{equation*}
H:=\left\{\left. \begin{pmatrix}
M_{1 } & 0 & \cdots & 0 \\
0& M_{2 } & \cdots & 0\\
\vdots  & \vdots  & \ddots & \vdots  \\
0& 0& \cdots & M_{d} 
\end{pmatrix}   \right| \   M_{i}=
 \begin{pmatrix}
1&x_{i}&z_{i}\\0&1&0\\0&0&1
\end{pmatrix} ,x_{i},v_{i}\in\mbb{R}, \
i=1,\dotsc,d  \right\}.
\end{equation*} 
 
\begin{prop}\label{Prop:dense}
The subgroup $\Gamma_{N} H$ is topologically dense in the Lie group $N$.
\end{prop}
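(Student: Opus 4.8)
The plan is to push the whole problem down to the quotient of $N$ by its center, where it becomes a purely number-theoretic density question. Let $Z\cong\mbb{R}^d$ be the center of $N\cong H_3^d$, spanned by the elements $w_j=[v_{2j-1},v_{2j}]$ from Proposition~\ref{Prop:irreducible}, and let $p\colon N\to N/Z\cong V=\mbb{R}^{2d}$ be the canonical projection. First I would observe that $H$ contains $Z$ (the central entries $z_i$ are free) and that $p(H)$ equals the $d$-dimensional subspace $V_+:=\bigoplus_{j=1}^d\mbb{R}v_{2j-1}$ (the entries $y_i$ vanish), so that in fact $H=p^{-1}(V_+)$. Since $V_+$ is precisely the sum of the $\rho(u_0)$-eigenspaces for the eigenvalues $\lambda_j=\sigma_{2j-1}(u_0)$, after ordering the embeddings so that $v_{2j-1}$ corresponds to $\sigma_{2j-1}$ and $v_{2j}$ to $\sigma_{2j}$, the quotient $V/V_+$ is recorded by the even coordinates, i.e. the induced map $q\colon V\to V/V_+\cong\mbb{R}^d$ satisfies $q(\sigma(x))=(\sigma_2(x),\sigma_4(x),\dots,\sigma_{2d}(x))$.

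Next, using $p(\Gamma_N)=\Lambda_K=\sigma(\mathcal{O}_K)$ together with $H=p^{-1}(V_+)\supseteq Z=\ker p$, I would establish the set-theoretic identity $\Gamma_N H=p^{-1}(\Lambda_K+V_+)$: the inclusion ``$\subseteq$'' is clear, and for ``$\supseteq$'' one writes $p(n)=p(\gamma)+v$ with $\gamma\in\Gamma_N$ and $v\in V_+$, so that $\gamma^{-1}n\in p^{-1}(V_+)=H$. Because $p$ is a (trivial, since $V$ is contractible) principal $Z$-bundle, a cylinder $p^{-1}(S)$ satisfies $\overline{p^{-1}(S)}=p^{-1}(\overline{S})$; hence $\overline{\Gamma_N H}=p^{-1}\bigl(\overline{\Lambda_K+V_+}\bigr)$, and the lemma becomes equivalent to the density of $\Lambda_K+V_+$ in $V$, that is, to the density of $q(\Lambda_K)=\{(\sigma_2(x),\dots,\sigma_{2d}(x));\ x\in\mathcal{O}_K\}$ in $\mbb{R}^d$.

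The heart of the argument, and the step I expect to be the main obstacle, is this last density statement, where totally real-ness and the primitive reciprocal unit $u_0$ must be used. The subgroup $q(\Lambda_K)\subset\mbb{R}^d$ is invariant under $D:=\diag\bigl(\sigma_2(u_0),\dots,\sigma_{2d}(u_0)\bigr)$, since $\rho(u_0)$ preserves both $\Lambda_K$ and $V_+$ and induces $D$ on $V/V_+$ (cf.\ Lemma~\ref{Lem:simdiagonal} and Proposition~\ref{Prop:directsum}). By the structure theorem for closed subgroups of $\mbb{R}^d$ we may write $\overline{q(\Lambda_K)}=W\oplus L$, with $W$ the identity component (a linear subspace) and $L$ a lattice in a complement. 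As $\overline{q(\Lambda_K)}$ is $D$-invariant, so is $W$; and because $u_0$ is a primitive unit of degree $2d$, its conjugates are pairwise distinct, so $D$ has simple spectrum and $W$ must be a coordinate subspace $\bigoplus_{j\in J}\mbb{R}v_{2j}$. It then suffices to exclude $J\neq\{1,\dots,d\}$. In that case the image of $q(\Lambda_K)$ in $V/V_+$ modulo $W$ is the discrete group $L$; equivalently the projection of $\Lambda_K$ onto the coordinates $\{2j;\ j\notin J\}$ is discrete. By the standard criterion that the projection of a full lattice is discrete if and only if the kernel meets the lattice in a sublattice of full rank, this would force $\Lambda_K\cap\{\sigma_{2j}=0\ \forall j\notin J\}$ to have rank $2d-\#\{j\notin J\}$. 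But a single embedding $\sigma_{2j}$ is injective, so this intersection is $\{0\}$ as soon as $\{j\notin J\}\neq\varnothing$; hence $2d-\#\{j\notin J\}=0$, which is absurd since $\#\{j\notin J\}\leq d<2d$. Therefore $J=\{1,\dots,d\}$, so $W=\mbb{R}^d$ and $q(\Lambda_K)$ is dense, which completes the proof.
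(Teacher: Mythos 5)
Your proof is correct, and it takes a genuinely different route from the paper's. The paper argues directly inside $N$: it takes the identity component $K$ of the topological closure of $\Gamma_N H$, observes that $K$ is invariant under conjugation by $\Gamma_A$ and that $(K\cap\Gamma_N)\backslash K$ is compact, deduces from this (a Raghunathan-type fact) that the Lie algebra of $K$ is compatible with the rational structure of $\lie{n}$, and concludes that its projection $V'$ to $V$ is a non-trivial, rational, $\rho(u_0)$-invariant subspace, hence all of $V$ because the characteristic polynomial of $\rho(u_0)$ is irreducible over $\mbb{Q}$. You instead push everything down to the abelianization $N/Z\cong V$ at the outset, via the explicit identity $\Gamma_N H=p^{-1}(\Lambda_K+V_+)$ and the fact that closures commute with $p^{-1}$, and then settle the resulting purely arithmetic question --- density in $\mbb{R}^d$ of the image of $\mathcal{O}_K$ under half of the embeddings --- using the structure theorem for closed subgroups of $\mbb{R}^d$, the simple spectrum of $\rho(u_0)$ on the quotient, the lattice-projection criterion, and the injectivity of a single field embedding. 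Each step checks out: the identification of $p(H)$ with $V_+$ and of the induced map on $V/V_+$ with $\diag(\sigma_2(u_0),\dotsc,\sigma_{2d}(u_0))$ is consistent with the paper's matrix realization, and the final contradiction $2d-\#\{j\notin J\}=0$ is sound. What your route buys is elementarity and transparency: you avoid the rationality-of-closed-subgroups machinery entirely and use only that the conjugates of $u_0$ are pairwise distinct --- neither $\mbb{Q}$-irreducibility nor the reciprocal property of $u_0$ is needed for this particular statement, which your argument makes visible. What the paper's route buys is brevity and uniformity: it recycles the irreducibility argument already used for Proposition~\ref{Prop:irreducible} and never needs the explicit coset description of $\Gamma_N H$ or a choice of coordinates on $V/V_+$.
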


\begin{proof}
Let $K:=(\overline{ \Gamma_{N} H}^{\, \rm top})^{0}$ be the identity component 
of the topological closure of $\Gamma_{N} H$ in $N$. Since $\Gamma_{N} H$ is 
invariant under conjugation by elements of $\Gamma_{A}$, we see that the same is 
true for the subgroup $K$ which has also the property that $(K\cap 
\Gamma_{N})\backslash K$ is compact. Therefore the Lie algebra $\mathfrak k$ of 
$K$ is compatible with the rational structure of $\mathfrak n$ and the 
projection $V'$  of $\mathfrak k$ along the center $\mathfrak z(\mathfrak n)$  
onto the vector space $V$ has the same property. Furthermore, $V' \subset V$ is 
a $u_{0}$-invariant non-trivial subspace compatible with the rational structure 
and this implies that $V' = V$, see also the proof of 
Proposition~\ref{Prop:irreducible}. The proposition is proven.
\end{proof}

\section{Properties of the quotient manifolds}

Let $G=A\ltimes N=(\mbb{R}^{>0}\ltimes H_3)^d=S^{d}$ be the solvable Lie group 
equipped with the left-invariant complex structure such that $G\cong 
(\mbb{C}\times\mbb{H})^d$ and let $\Gamma=\Gamma_A\ltimes\Gamma_N$ be the 
cocompact discrete subgroup constructed above. In this section we establish a 
number of topological and complex geometric properties of the compact complex 
manifold $X=\Gamma\backslash G$.

\subsection{The CR-fibration with Levi-flat fibers, the transversally 
hyperbolic foliation $\mathcal F$ and the Kodaira 
dimension}\label{Subs:CRfibration}

Let $Z$ denote the center of $G$, which is also the center of $N$. We first 
remark that $X$ considered as a real solv-manifold admits the following 
commutative diagram of equivariant fibrations, see~\cite[Proposition 2.17, 
Theorem~3.3, and Corollary~3.5]{Rag}:

\begin{equation*}
\begin{tikzcd}
 & Z\cdot \Gamma \backslash G \arrow{dr}{(S_{1})^{2d}} \\
p\colon X=\Gamma \backslash G \arrow{ur}{(S_{1})^{d}} \arrow{rr}{\Gamma_{N} 
\backslash N} &&  N \cdot \Gamma \backslash G \cong (S_{1})^{d}
\end{tikzcd}
\end{equation*}

The group $\mbb{C}^d$ acts on $G\cong(\mbb{C}\times\mbb{H})^d$ by translation in 
the $\mbb{C}$-factors. One shows directly that this action commutes with the 
left multiplication by $G$ and hence induces a holomorphic action of $\mbb{C}^d$ 
on $X$. The ineffectivity of this action is $\Gamma\cap Z$ and therefore we 
obtain an inclusion $(\mbb{C}^*)^d\hookrightarrow \Aut(X)$. The orbits of this  
$(\mbb{C}^*)^d$ are exactly the images of the $\mbb{C}^d$-factors in the 
universal covering of $X$. As a consequence, we see that the action of 
$(\mbb{C}^*)^d$ on $X$ induces a transversally hyperbolic holomorphic foliation 
$\mathcal{F}$ of $X$.

Since the lift of $p$ to the universal covering $G$ of $X$ coincides 
with the quotient map $G\to G/N\cong{\mbb{R}}^d$, we see that $p$ is 
$(\mbb{C}^*)^d$-invariant. Moreover, the construction of the left-invariant 
complex structure on $G$ shows that the $N$-orbits are generic CR-submanifolds 
of real dimension $3d$ and CR-dimension $d$ in $G$. Since the complex tangent 
space to the $N$-orbits contains the $N'$-orbit, they are Levi-flat. It follows 
that $p$ is a CR-map having Levi-flat fibers.

We determine the topological closure of the orbits of this 
$(\mbb{C}^*)^d$-action in the following

\begin{prop}\label{Prop:closure}
Let $x=\Gamma g\in \Gamma\backslash G=X$. The topological closure of 
$(\mbb{C}^*)^d\cdot x$ in $X$ coincides with the fiber of the projection $p$ 
passing through the point $x$ and is therefore isomorphic to the CR-nilmanifold 
$\Gamma_{N}\backslash N$.  In particular, $X$ does not contain 
any proper $(\mbb{C}^*)^d$-invariant analytic subset.
\end{prop}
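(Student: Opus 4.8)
The plan is to show that every $(\mbb{C}^*)^d$-orbit is the image of a single left coset of an abelian normal subgroup $H\triangleleft G$, and that the density of this coset inside the corresponding fibre of $p$ is exactly Proposition~\ref{Prop:dense} after conjugating back to the base fibre; the final assertion then follows from the genericity of the fibres as CR-submanifolds.

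First I would identify the $(\mbb{C}^*)^d$-action with $H$. The $\mbb{C}^d$-action is translation in the $\mbb{C}$-factors of $G\cong(\mbb{C}\times\mbb{H}^+)^d$; writing $h_c$ for the translate of the identity by $c\in\mbb{C}^d$, a direct computation in coordinates identifies $\{h_c\}$ with the subgroup $H\subset N$ of Proposition~\ref{Prop:dense} (the locus $y_i=0$), and shows $c\mapsto h_c$ is an isomorphism $\mbb{C}^d\cong H$. Since this action commutes with left multiplication by $G$, the orbit through $x=\Gamma g$ satisfies $T_c(g)=g\,h_c$, so that $(\mbb{C}^*)^d\cdot x=\pi(gH)$ is a left coset of $H$. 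As $H\subset N$, the orbit lies in the fibre $\pi(gN)$ of $p$ through $x$, which recovers the $(\mbb{C}^*)^d$-invariance of $p$; the inclusion $\overline{(\mbb{C}^*)^d\cdot x}\subseteq\pi(gN)$ is then automatic, and the content is the reverse inclusion.

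Next I would reduce the density to Proposition~\ref{Prop:dense}. The quotient map identifies the fibre $\pi(gN)$ with $\Lambda\backslash N$ for $\Lambda:=g^{-1}\Gamma_N g$, and under this parametrization the orbit $\pi(gH)$ becomes the image of $H$. Because the projection $q\colon N\to\Lambda\backslash N$ is an open surjection, this image is dense if and only if $\Lambda H=g^{-1}\Gamma_N g\,H$ is dense in $N$. Here the key structural point is that $H$ is \emph{normal} in $G$, not merely in $N$: conjugation by $A$ preserves the locus $y_i=0$ by the explicit formula displayed in Section~\ref{Subs:CocompactDiscreteSubgroups}, and a short Heisenberg computation shows the same for conjugation by $N$, whence $gHg^{-1}=H$ for all $g\in G$. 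Conjugating the set $g^{-1}\Gamma_N g\,H\subset N$ by $g$ — a self-homeomorphism of $N$ since $N\triangleleft G$ — turns it into $\Gamma_N\,(gHg^{-1})=\Gamma_N H$, whose density in $N$ is precisely Proposition~\ref{Prop:dense}. Therefore $\overline{(\mbb{C}^*)^d\cdot x}=\pi(gN)$, the full fibre, which by the structure of $p$ is isomorphic to the CR-nilmanifold $\Gamma_N\backslash N$. For the last statement, suppose $Y\subsetneq X$ is a nonempty $(\mbb{C}^*)^d$-invariant analytic subset and choose a regular point $x\in Y$. Since $Y$ is closed and invariant it contains $\overline{(\mbb{C}^*)^d\cdot x}$, i.e.\ the entire fibre $F$ through $x$, so $T_xF\subseteq T_xY$. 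But the fibres of $p$ are generic CR-submanifolds (Section~\ref{Subs:CRfibration}), meaning $T_xF+J(T_xF)=T_xX$; as $T_xY$ is a $J$-invariant subspace containing $T_xF$, it must equal $T_xX$, so $\dim_{\mbb{C}}Y=\dim_{\mbb{C}}X$ at $x$, forcing $Y=X$ by connectedness and contradicting properness.

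I expect the main obstacle to be the reduction step: recognizing that $H$ is normal in the full group $G$ (so that all fibres are equivalent to the base fibre and Proposition~\ref{Prop:dense} applies verbatim), together with the bookkeeping that identifies the orbit $\pi(gH)$ with the image of $H$ under the correct parametrization $\pi(gN)\cong(g^{-1}\Gamma_N g)\backslash N$. The genericity input in the final step is the clean geometric reason why no proper thickening of a fibre can be complex-analytic: the transverse $\operatorname{Im}\tau$-directions are carried into the fibre's tangent space by $J$, leaving no room below full dimension.
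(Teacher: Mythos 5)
Your proof is correct and takes essentially the same route as the paper's: the paper's (one-line) proof likewise identifies the $(\mbb{C}^*)^d$-orbits with the right orbits of the normal subgroup $H$ in $\Gamma\backslash G$ and invokes Proposition~\ref{Prop:dense}. Your write-up merely supplies the details the paper leaves implicit (the verification that $H$ is normal in all of $G$, the conjugation bookkeeping identifying a general fiber with the base fiber, and the genericity-of-CR-fibers argument for the final assertion about invariant analytic subsets).
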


\begin{proof}
For the proof, it suffices to remark that the $(\mbb{C}^*)^d$-orbits are 
exactly the right orbits in $\Gamma\backslash G=X$ of the normal subgroup $H$ 
and to apply Proposition~\ref{Prop:dense}.
\end{proof}

\begin{cor}
Every holomorphic function on $\Gamma_N\backslash G$ is constant.
\end{cor}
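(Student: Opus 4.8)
The plan is to pass to the universal cover $G\cong(\mbb C\times\mbb H)^d$ and to combine the compactness of the fibres of the real fibration with the density statement of Proposition~\ref{Prop:dense}. Let $f$ be holomorphic on $\Gamma_N\backslash G$ and lift it to a holomorphic function $\tilde f$ on $G$ that is invariant under left translation by $\Gamma_N$. Writing a point as $(w,\tau)\in(\mbb C\times\mbb H)^d$, I recall from the construction of the complex structure that the holomorphic $\mbb C^d$-action is right translation by $H$, namely $w\mapsto w+c$, $\tau\mapsto\tau$ for $c\in\mbb C^d$, whereas left translation by $N$ fixes $\im\tau$ and acts on the base by $\tau\mapsto\tau+y_0$ with $y_0\in\mbb R^d$. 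Consequently the map $q\colon\Gamma_N\backslash G\to(\mbb R^{>0})^d$, $q(\Gamma_N g)=\im\tau(g)$, is well defined, and each of its fibres $q^{-1}(b)$ is a single left $N$-orbit $\Gamma_N\backslash Ng_b$, hence a copy of the compact nilmanifold $\Gamma_N\backslash N$ since $\Gamma_N$ is cocompact in $N$.

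First I would prove that $\tilde f$ depends only on $\im\tau$. Fix $b\in(\mbb R^{>0})^d$ and set $g_b=(0,ib)\in G$. Because every $a\in A$ normalises $N$ and satisfies $aHa^{-1}=H$, the $\mbb C^d$-orbit of $\Gamma_N g_b$ is the image in $\Gamma_N\backslash G$ of $\Gamma_N g_b H=\Gamma_N Hg_b$ (using $g_bHg_b^{-1}=H$), whose closure is the entire fibre $q^{-1}(b)$ by Proposition~\ref{Prop:dense}. Now $c\mapsto\tilde f(c,ib)$ is an entire function on $\mbb C^d$ whose values lie in the compact fibre $q^{-1}(b)$; hence it is bounded, and Liouville's theorem makes it constant. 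Therefore $f$ is constant on a dense subset of $q^{-1}(b)$, and by continuity $f\equiv\tilde f(0,ib)$ on all of $q^{-1}(b)$.

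Since every point $(w,\tau)$ with $\im\tau=b$ lies in $q^{-1}(b)$, this shows $\tilde f(w,\tau)=\phi(\im\tau)$ for some function $\phi$ on $(\mbb R^{>0})^d$. As $\tilde f$ is holomorphic, the Cauchy--Riemann equations $\partial\tilde f/\partial\bar\tau_j=0$ force $\partial\phi/\partial b_j=0$ for every $j$, so $\phi$ is constant and hence so are $\tilde f$ and $f$. I expect the only substantial input to be the density of the $\mbb C^d$-orbit in the compact fibre: this is precisely Proposition~\ref{Prop:dense} together with the cocompactness of $\Gamma_N$ in $N$, which guarantees that the fibre is compact. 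Everything else reduces to Liouville's theorem and the elementary observation that a holomorphic function depending only on $\im\tau$ must be constant.
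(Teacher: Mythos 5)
Your proof is correct and takes essentially the same route as the paper: its one-line argument cites Proposition~\ref{Prop:closure}, whose proof is exactly the density statement of Proposition~\ref{Prop:dense} that you use, combined with boundedness/Liouville on the compact fibres $\Gamma_N\backslash N$; the only (harmless) difference is that you close by showing $\tilde f$ depends only on $\im\tau$ and invoking the Cauchy--Riemann equations, whereas the paper appeals to the genericity of the CR-submanifold $\Gamma_N\backslash N$ to propagate constancy. One small wording slip: it is the arguments $(c,ib)$, not the values of $\tilde f$, that lie in the compact fibre $q^{-1}(b)$ --- what you actually use is that $\lvert \tilde f(\cdot,ib)\rvert$ is bounded by $\max_{q^{-1}(b)}\lvert f\rvert$, which is finite by compactness.
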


\begin{proof}
This follows from Proposition~\ref{Prop:closure}, since $\Gamma_{N} \backslash 
N$ is a generic CR-submanifold of $\Gamma_{N} \backslash G$.
\end{proof}

This corollary implies the following

\begin{cor}
The Kodaira dimension of $X$ is $-\infty$.
\end{cor}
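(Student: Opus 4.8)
The plan is to show that all plurigenera vanish, that is $H^0(X,mK_X)=0$ for every $m\geq1$, which is precisely the assertion $\kod(X)=-\infty$. Pulling back to the universal covering $G$, which the left-invariant complex structure identifies biholomorphically with $(\mbb{C}\times\mbb{H}^+)^d$, I would trivialise the canonical bundle by the nowhere-vanishing holomorphic $2d$-form $\Omega:=dw_1\wedge d\tau_1\wedge\dotsb\wedge dw_d\wedge d\tau_d$, where $(w_i,\tau_i)$ are the coordinates on the $i$-th factor $\mbb{C}\times\mbb{H}^+$. A weight-$m$ pluricanonical section of $X$ then lifts to a $\Gamma$-invariant expression $f\,\Omega^{\otimes m}$ with $f$ holomorphic on $G$, and the whole problem reduces to proving $f\equiv0$.

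First I would record how $\Omega$ transforms under the left action of $G$. By the realisation at the end of Section~\ref{Subs:CocompactDiscreteSubgroups}, an element of the $i$-th factor $S=\mbb{R}^{>0}\ltimes H_3$ with block $\left(\begin{smallmatrix}1&x&v\\0&b&y\\0&0&1\end{smallmatrix}\right)$ moves the orbit coordinates by $w_i\mapsto w_i+x\tau_i+v$ and $\tau_i\mapsto b\tau_i+y$, so that $dw_i\wedge d\tau_i\mapsto b\,dw_i\wedge d\tau_i$. Hence left multiplication by $\gamma\in G$ multiplies $\Omega$ by $\prod_{i=1}^d b_i(\gamma)$, where $b_i(\gamma)$ is the $(2,2)$-entry of the $i$-th block of $\gamma$. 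For $\gamma\in\Gamma_N$ one has $b_i(\gamma)=1$ for all $i$, so $\Gamma_N$-invariance of $f\,\Omega^{\otimes m}$ reduces to $\Gamma_N$-invariance of $f$; hence $f$ descends to a holomorphic function on $\Gamma_N\backslash G$, which by the preceding corollary is constant.

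It then remains to use invariance under $\Gamma_A$. For $u\in\Gamma_A$ the corresponding block-diagonal element has $b_i=\sigma_{2i}(u)$, so invariance of $f\,\Omega^{\otimes m}$ forces the constant $f$ to satisfy $\bigl(\prod_{i=1}^d\sigma_{2i}(u)\bigr)^m f=f$. The key point I would isolate is that the character $u\mapsto\prod_{i=1}^d\sigma_{2i}(u)$ is nontrivial on $\Gamma_A$: in Dirichlet's logarithmic embedding $u\mapsto(\log\sigma_1(u),\dotsc,\log\sigma_{2d}(u))$ the rank-$d$ lattice $\Gamma_A$ spans the $d$-dimensional subspace defined by $\log\sigma_{2i-1}+\log\sigma_{2i}=0$ for $1\leq i\leq d$, and this subspace is not contained in the additional hyperplane $\sum_{i=1}^d\log\sigma_{2i}=0$. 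Choosing such a $u$, which is totally positive, makes $\prod_{i=1}^d\sigma_{2i}(u)$ a positive real number different from $1$, so $\bigl(\prod_{i=1}^d\sigma_{2i}(u)\bigr)^m\neq1$ for every $m\geq1$; therefore $f=0$ and the section vanishes. Consequently $H^0(X,mK_X)=0$ for all $m\geq1$, whence $\kod(X)=-\infty$.

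The main obstacle is the final number-theoretic step, namely verifying that $u\mapsto\prod_{i=1}^d\sigma_{2i}(u)$ does not vanish identically on $\Gamma_A$; the transformation law for $\Omega$ and the reduction to a constant via the preceding corollary are then routine.
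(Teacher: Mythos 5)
Your proof is correct and follows essentially the same route as the paper's: both reduce a pluricanonical section to a holomorphic function on $\Gamma_N\backslash G$ (constant by the preceding corollary, using that the affine-linear $\Gamma$-action makes $K_X$ flat) and then kill the constant using the Jacobian character of $\Gamma_A$. You make fully explicit the last step --- that $u\mapsto\prod_{i=1}^d\sigma_{2i}(u)$ is nontrivial on the full-rank lattice $\Gamma_A\subset A$, checked via the logarithmic embedding --- which the paper's proof leaves implicit, and your verification of it is correct.
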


\begin{proof}
Since $\Gamma$ acts by affine-linear transformations on 
$(\mbb{C}\times\mbb{H})^d$, the tangent bundle of $X$ and all its induced vector 
bundles are flat. In particular, the canonical bundle of $X$ and all its powers 
are flat, i.e., given by representations of $\Gamma $ in $\mbb C^{*}$. This 
implies that the canonical bundle of a finite covering of $\Gamma_N\backslash G$ 
is holomorphically trivial, since for the commutator group one has $\Gamma' 
\subset \Gamma_{N}$. Since every holomorphic function on $\Gamma_N\backslash G$ 
is constant, we see that $H^0(X,K_X^n)=0$ for all $n\geq0$. Hence, $\kod 
X=-\infty$.
\end{proof}

\subsection{The identity component of $\Aut(X)$ and the non-K\"ahler property}

In order to determine explicitly the holomorphic vector fields on $X$, let us 
give the action $G=A\ltimes N$ on each factor of $(\mbb{C}\times\mbb{H}^+)^d$ 
explicitly. For $g=\left(\begin{smallmatrix}1&a&c 
\\0&t&b\\0&0&1\end{smallmatrix}\right)\in\mbb{R}^{>0}\ltimes H_3$ and 
$(z,w)\in\mbb{C}\times\mbb{H}^+$ we have
\begin{equation*}
\begin{pmatrix}
1&a&c\\0&t&b\\0&0&1
\end{pmatrix}\cdot(z,w)=(z+aw+c,tw+b).
\end{equation*}
Let $\pi\colon G\to\Aut\bigl((\mbb{H}^+)^d\bigr)$ be the natural projection. 
It follows from Proposition~\ref{Prop:dense} that $\pi(\Gamma_N)$ is a 
countable, topologically dense subgroup of the unipotent radical of the Borel 
subgroup of affine transformations in $\Aut\bigl((\mbb{H}^+)^d\bigr)$.  
This observation allows us to carry over the proof 
of~\cite[Proposition~3(ii)]{In74} in order to obtain the following.

\begin{prop}
We have $H^0(X,\Theta)=\mbb{C}^d \cong \langle \frac{\partial}{\partial 
z_{1}},\dotsc,\frac{\partial}{\partial z_{d}}\rangle_{\mbb C} $ and therefore 
$\Aut^0(X)\cong(\mbb{C}^*)^d$.
\end{prop}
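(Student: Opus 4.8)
The plan is to pull a global holomorphic vector field back to the universal cover and exploit the explicit affine action of $\Gamma$ together with two facts already established: the density statement of Proposition~\ref{Prop:dense}, used in the packaged form that every holomorphic function on $\Gamma_N\backslash G$ is constant, and the splitting of $\Gamma$ into the nilpotent part $\Gamma_N$ (on which the diagonal entry $t$ of each Heisenberg block equals $1$) and the abelian part $\Gamma_A$ (which acts by the scalings $w_j\mapsto t_jw_j$). First I would lift a class $\theta\in H^0(X,\Theta)$ to a $\Gamma$-invariant holomorphic vector field on $G\cong(\mbb{C}\times\mbb{H}^+)^d$ and write it in the standard coordinates as
\begin{equation*}
\theta=\sum_{j=1}^d\Bigl(f_j\,\tfrac{\partial}{\partial z_j}+g_j\,\tfrac{\partial}{\partial w_j}\Bigr),
\end{equation*}
with $f_j,g_j$ holomorphic on $(\mbb{C}\times\mbb{H}^+)^d$. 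Since $\Gamma$ acts block-diagonally through the affine maps $(z_j,w_j)\mapsto(z_j+a_jw_j+c_j,\,t_jw_j+b_j)$, the Jacobian on the $j$-th factor is $\left(\begin{smallmatrix}1&a_j\\0&t_j\end{smallmatrix}\right)$, so $\Gamma$-invariance of $\theta$ translates into the transformation laws $g_j\circ\gamma=t_j\,g_j$ and $f_j\circ\gamma=f_j+a_j\,g_j$ for every $\gamma\in\Gamma$.

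The second step is to show $g_j\equiv0$. For $\gamma\in\Gamma_N$ each diagonal entry satisfies $t_j=1$, so the first law gives $g_j\circ\gamma=g_j$; hence $g_j$ is a $\Gamma_N$-invariant holomorphic function on $G$ and descends to a holomorphic function on $\Gamma_N\backslash G$, which is constant by the corollary of the previous subsection. Evaluating the law on the primitive reciprocal unit $u_0\in\Gamma_A$, whose eigenvalue $t_j=\sigma_{2j}(u_0)$ differs from $1$ (the minimal polynomial of $u_0$ is irreducible of degree $2d\geq2$, so $1$ is not a root), the relation $g_j=t_j\,g_j$ forces the constant to vanish, i.e.\ $g_j\equiv0$ for all $j$.

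With $g_j\equiv0$ the law for $f_j$ simplifies to $f_j\circ\gamma=f_j$ for all $\gamma\in\Gamma_N$, so the same descent argument shows that each $f_j$ is constant; the remaining $\Gamma_A$-condition $f_j\circ\gamma=f_j$ is then automatic. Hence $\theta=\sum_j c_j\,\tfrac{\partial}{\partial z_j}$ with $c_j\in\mbb{C}$, and conversely each $\tfrac{\partial}{\partial z_j}$ is $\Gamma$-invariant because the $\mbb{C}^d$-translations in the $z$-factors commute with the $G$-action. This proves $H^0(X,\Theta)=\mbb{C}^d=\langle\tfrac{\partial}{\partial z_1},\dotsc,\tfrac{\partial}{\partial z_d}\rangle_{\mbb C}$.

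For the conclusion on the automorphism group I would use that $\Aut^0(X)$ is a connected complex Lie group whose Lie algebra is $H^0(X,\Theta)\cong\mbb{C}^d$; the injection $(\mbb{C}^*)^d\hookrightarrow\Aut(X)$ constructed in Section~\ref{Subs:CRfibration} has connected image, hence lands in $\Aut^0(X)$, and this image has full dimension $d$, so it is open and therefore all of the connected group, giving $\Aut^0(X)\cong(\mbb{C}^*)^d$. I expect the only genuine bookkeeping to be the passage to the universal cover and the correct reading of the block Jacobians; the conceptual engine—turning $\Gamma_N$-invariance into constancy and then using the $\Gamma_A$-scaling with $t_j\neq1$ to annihilate $g_j$—is exactly where the density property of $\Gamma_N$ does all the work, in the spirit of~\cite[Proposition~3(ii)]{In74}.
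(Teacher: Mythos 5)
Your proposal is correct and follows essentially the same route as the paper, which simply invokes the density of $\pi(\Gamma_N)$ (Proposition~\ref{Prop:dense}) and refers to Inoue's proof of \cite[Proposition~3(ii)]{In74}; your argument is precisely that proof carried over, with the density input packaged through the corollary that holomorphic functions on $\Gamma_N\backslash G$ are constant. The two key computations --- the transformation laws $g_j\circ\gamma=t_j g_j$, $f_j\circ\gamma=f_j+a_j g_j$, the vanishing of $g_j$ via $\sigma_{2j}(u_0)\neq 1$, and the identification of $\Aut^0(X)$ with the open image of $(\mbb{C}^*)^d$ --- are all sound.
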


\begin{cor}
The manifold $X$ is not K\"ahler.
\end{cor}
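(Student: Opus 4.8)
The plan is to derive a contradiction from the existence of a holomorphic vector field on $X$ with no zeros, which is made available precisely by the preceding computation $H^0(X,\Theta)=\mbb{C}^d\cong\langle\frac{\partial}{\partial z_1},\dotsc,\frac{\partial}{\partial z_d}\rangle$ together with the freeness of the $(\mbb{C}^*)^d$-action. First I would record that $X$ has \emph{positive} first Betti number. Since $G\cong(\mbb{C}\times\mbb{H}^+)^d$ is diffeomorphic to $\mbb{R}^{4d}$, hence contractible, the quotient $X=\Gamma\backslash G$ is aspherical with $\pi_1(X)\cong\Gamma$; consequently $b_1(X)=\rk_{\mbb{Z}}\Gamma^{\mathrm{ab}}$. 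As $\Gamma=\Gamma_A\ltimes\Gamma_N$ surjects onto $\Gamma_A\cong\mbb{Z}^d$, the abelianisation $\Gamma^{\mathrm{ab}}$ surjects onto $\mbb{Z}^d$, so that $b_1(X)\geq d\geq1$.

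Next I would exhibit the nowhere-vanishing field. The vector field $\frac{\partial}{\partial z_1}\in H^0(X,\Theta)$ generates the translation flow in the first $\mbb{C}$-factor, which is one of the one-parameter subgroups of the $(\mbb{C}^*)^d$-action on $X$. Since that action is free, the flow has no stationary point, and therefore $\frac{\partial}{\partial z_1}$ vanishes nowhere on $X$.

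To finish, suppose $X$ were K\"ahler. By the theorem of Carrell--Lieberman, a compact K\"ahler manifold carrying a holomorphic vector field with empty zero set satisfies $H^{p,q}(X)=0$ for all $p\neq q$; in particular $h^{1,0}=h^{0,1}=0$, whence $b_1(X)=h^{1,0}+h^{0,1}=0$. This contradicts $b_1(X)\geq1$ obtained above, so $X$ cannot be K\"ahler.

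The hard part is essentially the single K\"ahler-specific input: one must correctly invoke the vanishing of the off-diagonal Hodge numbers forced by a zero-free holomorphic vector field, since the naive parity obstruction (K\"ahler $\Rightarrow b_1$ even) is \emph{useless} here, $b_1(X)=d$ being even as soon as $d$ is even. An alternative would bypass Hodge theory and use the Fujiki--Lieberman structure theory for $\Aut^0$ of a compact K\"ahler manifold: as $\Aut^0(X)\cong(\mbb{C}^*)^d$ is linear algebraic, the torus quotient in that structure would be trivial, forcing $\Aut^0(X)$ to act with a fixed point and again contradicting freeness. I would nonetheless prefer the Betti-number route, because establishing the existence of such a fixed point for the linear part on a merely K\"ahler (not necessarily projective) manifold is itself delicate, whereas the Carrell--Lieberman argument is immediate once the zero-free field is in hand.
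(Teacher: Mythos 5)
Your preparatory steps are sound: $X$ is aspherical with $\pi_1(X)\cong\Gamma$, the surjection $\Gamma\to\Gamma_A\cong\mbb{Z}^d$ gives $b_1(X)\geq d\geq 1$ (in fact $b_1(X)=d$ by the Lie-algebra cohomology computation in the paper), and $\frac{\partial}{\partial z_1}$ does descend to a nowhere-vanishing holomorphic vector field on $X$ because the $\Gamma$-action on the universal cover is affine with trivial linear part in the $z$-directions. The fatal step is the invocation of Carrell--Lieberman. Their theorem asserts that if a holomorphic vector field on a compact K\"ahler manifold has \emph{non-empty} zero set $Z$, then $H^q(X,\Omega^p)=0$ for $\abs{p-q}>\dim Z$; the hypothesis $Z\neq\emptyset$ is essential, and the conclusion you want for $Z=\emptyset$ is simply false. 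An elliptic curve, or any complex torus $\mbb{C}^n/\Lambda$, is compact K\"ahler, carries the nowhere-vanishing field $\frac{\partial}{\partial z_1}$, and has $h^{1,0}=n\neq 0$ and $b_1=2n>0$. This counterexample also shows that your strategy cannot be repaired by a better citation: every hypothesis you actually use (compact K\"ahler, zero-free holomorphic vector field, $b_1>0$, even a free action of a $d$-dimensional complex Lie group) is satisfied by a torus, so no contradiction can follow from these data alone. What a zero-free field does give on a compact K\"ahler manifold is the vanishing of the Chern numbers and of the Euler characteristics $\chi(X,\Omega^p)$, neither of which conflicts with $b_1(X)=d$.

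A correct argument has to use something specific to $X$ beyond the existence of the action, and this is what the paper does: by Fujiki, $\Aut^0$ of a compact K\"ahler manifold acts meromorphically, so its orbits are Zariski-open in their closures, i.e.\ locally closed; but Proposition~\ref{Prop:closure} shows that each $(\mbb{C}^*)^d$-orbit, of real dimension $2d$, is dense in a fiber of $p$, a copy of the $3d$-dimensional nilmanifold $\Gamma_N\backslash N$, hence is not locally closed. Your sketched alternative via the Fujiki--Lieberman structure of $\Aut^0(X)$ is closer in spirit to this, but as stated it also has a gap: $(\mbb{C}^*)^d$ admits surjective holomorphic homomorphisms onto compact complex tori, so the linear-algebraicity of the abstract group $\Aut^0(X)$ does not by itself force the torus quotient in the exact sequence $1\to L(X)\to\Aut^0(X)\to T\to 1$ to vanish; one would need to control the action on $\Alb(X)$, which is essentially what the paper's Proposition~\ref{Prop:closed1forms} accomplishes by other means.
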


\begin{proof}
If $X$ was K\"ahler, then due to~\cite{Fuj} the group $\Aut^0(X)$ would act 
meromorphically on $X$ and consequently its orbits would be locally closed, 
which is not the case.
\end{proof}

\subsection{Infinitely many connected components for $d \geq 2$}

In this subsection we show that the whole group $\Aut(X)$ has infinitely 
many components for $d\geq2$. Note that the automorphism groups of Inoue 
surfaces $S_{N}^{ \mathsmaller{(+)}}$, (this is the case $d=1 $) have only 
finitely many components.

First we note that the group $\wt{A}\cong(\mbb{R}^{>0})^{2d-1}$ acts as a group 
of holomorphic transformations on $(\mbb{C}\times\mbb{H}^+)^d$ by
\begin{equation*}
(\lambda_1,\mu_1,\dotsc,\lambda_d,\mu_d)\cdot (z_1,w_1,\dotsc,z_d,w_d) := 
(\lambda_1\mu_1 z_1,\mu_1 w_1,\dotsc,\lambda_d\mu_d z_d,\mu_d w_d),
\end{equation*}
where we suppose $\lambda_1\mu_1\dotsb\lambda_d\mu_d=1$. 

This action extends the $A$-action on $(\mbb{C}\times\mbb{H}^+)^d$ where  $A$  is embedded in $\wt{A}$ by
\begin{equation*}
(\mu_1,\dotsc,\mu_d)\mapsto(\mu_1^{-1},\mu_1,\dotsc,\mu_d^{-1},\mu_d).
\end{equation*}

In the next step we show that the $\wt{A}$-action normalizes the simply 
transitive $G$-action on $(\mbb{C}\times\mbb{H}^+)^d$. Writing $G=A\ltimes N$ as 
the $d$-fold product of the matrix group
\begin{equation*}
S=\left\{
\begin{pmatrix}
1 & a & c\\ 0 & \alpha & b \\ 0 & 0 & 1
\end{pmatrix};\ \alpha\in\mbb{R}^{>0}, a,b,c\in\mbb{R}\right\},
\end{equation*}
and defining $\varphi_{\lambda,\mu}(z,w):=(\lambda\mu z,\mu w)$ for 
$\lambda,\mu>0$ and $(z,w)\in\mbb{C}\times\mbb{H}^+$, we obtain
\begin{equation*}
\varphi_{\lambda,\mu}\bigl(g\cdot\varphi_{\lambda,\mu}^{-1}(z,w)\bigr)=
(z+\lambda a w + \lambda\mu c,\alpha w+\mu b).
\end{equation*}
Consequently, the induced action of $\wt{A}$ on $G$ coincides with the 
conjugation of $\wt{A}$ on the normal subgroup $A\ltimes N$ in $\wt{A} \ltimes 
N$.

It therefore follows that the action of the subgroup $\mathcal{O}^{*,+}_K$ of 
$\wt{A}$ on $(\mbb{C}\times\mbb{H}^+)^d$ normalizes $\Gamma=\Gamma_A 
\ltimes\Gamma_N$. This implies that the action of $\mathcal{O}^{*,+}_K$ descends 
holomorphically to the compact quotient $X=\Gamma\backslash G$.

\subsection{An Anosov property of the foliation $\mathcal F$ in the case $d=2$}

As we have seen in the previous subsection, the group $\mathcal{O}^{*,+}_K/ 
\Gamma_A$ embeds into $\Aut(X)$. It is clear that this discrete group of 
automorphisms stabilizes the foliation $\mathcal{F}$ of $X$. In this subsection 
we shall see  that for $d=2$ non-trivial elements $\varphi$ of $\mathcal{O}^{*,+}_K/ 
\Gamma_A$ 
have an Anosov property relative to $\mathcal{F}$, i.e. that the 
bundle map  $\varphi_*$ is Anosov when restricted to the involutive subbundle 
$T\mathcal{F}\subset TX$.  

Suppose first that $d$ is arbitrary and consider the bundle map $\varphi_*\colon 
TX\to TX$ given by the push-forward of tangent vectors. We shall trivialize 
first $TG$ via left-invariant vector fields which trivialize then $TX$ as well. 
Concretely, let $g\in G$ and consider $\varphi_*\colon T_gG\to T_{\varphi(g)}G$. 
Since $T_gG=(\ell_g)_*\lie{g}$, we are led to consider
\begin{equation*}
\bigl(\ell^{-1}_{\varphi(g)}\circ\varphi\circ\ell_g\bigr)_*\colon 
\lie{g}\to\lie{g}.
\end{equation*}
The map $G\to{\rm{GL}}(\lie{g})$ given by $g\mapsto 
\bigl(\ell^{-1}_{\varphi(g)}\circ\varphi\circ\ell_g\bigr)_*$ encodes the action 
of $\varphi_*$ on $TG$. Moreover, since $\varphi$ normalizes the action of 
$\Gamma$ by left multiplication on $G$, it follows that we obtain a well-defined 
map
\begin{equation*}
\rho_\varphi\colon X=\Gamma\backslash G\to{\rm{GL}}(\lie{g})
\end{equation*}
that encodes the action of $\varphi_*$ on $TX$. In particular, for 
$\varphi=\ell_{\gamma}$ with $\gamma\in\Gamma$ we have 
$\rho_\varphi(x)=\Id_\lie{g}$ for all $x\in X$.

Since  the above defined matrix group  $S$ is an open subset of an affine subspace of $\mbb{R}^{3\times3}$, we 
have global coordinates on $G=S^d$ with respect to which we can explicitly 
calculate the map $S^d\to{\rm{GL}}(\lie{g})$. For $\varphi\colon S^d\to S^d$ 
given by
\begin{equation*}
\varphi
\begin{pmatrix}
1&x_i&z_i\\0&a_i&y_i\\0&0&1
\end{pmatrix}=
\begin{pmatrix}
1&\lambda_i\mu_i x_i&\lambda_i\mu_i z_i\\0&\mu_i a_i&\mu y_i\\0&0&1 
\end{pmatrix},   \ \ i=1,\dotsc,d,
\end{equation*}
and $\xi_i=\left(\begin{smallmatrix}0&p_i&r_i\\0&t_i&q_i\\0&0&0\end{smallmatrix}
\right)\in\lie{s}$ we obtain
\begin{equation*}
\bigl(\ell^{-1}_{\varphi(g)}\circ\varphi\circ\ell_g\bigr)_*\xi_i=
\begin{pmatrix}
0&\lambda_i\mu_i p_i&\lambda_i\mu_i r_i\\0&t_i&q_i\\0&0&0
\end{pmatrix}.
\end{equation*}
This shows that for all $\varphi \in  \mathcal{O}^{*,+}_K/ \Gamma_A $ the action 
of $\varphi$ on the bundle $T\mathcal F$ is given by multiplication with the 
$\lambda_i\mu_i$ in the coordiante $\frac{\partial}{\partial z_{i}}$ for 
$i=1,\dotsc,d$.

If $d=2$, let $\varphi\in\mathcal{O}^{*,+}_K/\Gamma_A$ be a non-trivial element 
and let $\lambda_{1},\mu_{1},\lambda_{2},\mu_{2} >0$ be the factors 
corresponding to $\varphi$. We have $\prod_{i=1}^{2}  \lambda_{i}\mu_{i} =1$. 
If one of the products $\lambda_{i}\mu_{i}$ was equal to $1$, the other one 
would also be equal to $1$. Then $\varphi$ would be an element of $\Gamma_{A}$ 
and, considered as an element of $\Aut(X)$, would be the identity, a 
contradiction. Therefore the bundle $T\mathcal F$ enjoys the mentioned Anosov 
property with respect to $\varphi$.

For $d\geq 3$ it seems to be an interesting number theoretic question if there 
always exists an automorphism having this Anosov property with respect to the 
foliation $\mathcal F$.

\subsection{Topological structure of $X$}

Since $G$ is simply-connected solvable and since the adjoint operators of $A$ 
are diagonalizable over $\mbb{R}$, the real cohomology of $X$ may be computed 
via the Lie algebra cohomology of $\lie{g}$.
 
Since $G=A\ltimes N$ is the identity component of the real-algebraic Lie group 
$(\mbb{R}^*)^d\ltimes N$ and since $\Gamma$ is Zariski-dense in 
$(\mbb{R}^*)^d\ltimes N$, we may apply~\cite[Corollary~7.29]{Rag} in order to 
determine the deRham cohomology of $X$.
 
\begin{prop}
We have $H^k(X,\mbb{R})\cong H^k(\lie{g})$ for all $k\geq0$. Moreover, we have
\begin{equation*}
H^k(\lie{g})=\bigoplus_{k_1+\dotsb+k_d=k}\bigl(H^{k_1}(\mbb{R}\oplus\lie{h}_3) 
\otimes \dotsb \otimes H^{k_d}(\mbb{R}\otimes\lie{h}_3)\bigr),
\end{equation*}
where $H^0(\mbb{R}\otimes\lie{h}_3)\cong H^4(\mbb{R}\otimes\lie{h}_3) 
\cong\mbb{R}$, $H^1(\mbb{R}\otimes\lie{h}_3)\cong H^3(\mbb{R}\otimes\lie{h}_3) 
\cong\mbb{R}$, and $H^2(\mbb{R}\otimes\lie{h}_3)=\{0\}$.
\end{prop}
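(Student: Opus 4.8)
The plan is to treat the two assertions in turn: first the passage from the deRham cohomology of $X$ to the Lie algebra cohomology of $\lie{g}$, and then the explicit determination of $H^*(\lie{g})$.

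For the isomorphism $H^k(X,\mbb{R})\cong H^k(\lie{g})$ I would verify and invoke the cited result \cite[Corollary~7.29]{Rag}. Its hypotheses have all been established during the construction: the group $G=A\ltimes N$ is simply connected and solvable, it is the identity component of the real algebraic group $(\mbb{R}^*)^d\ltimes N$, the adjoint action of $A$ on $\lie{g}$ is diagonalizable over $\mbb{R}$, and $\Gamma$ is Zariski dense in $(\mbb{R}^*)^d\ltimes N$. Under these assumptions the inclusion of the subcomplex of left-invariant forms into the full deRham complex of $X$ is a quasi-isomorphism, which yields the first claim.

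For the computation of $H^*(\lie{g})$ the key structural observation is that $G=S^d$ is a direct product, so that $\lie{g}=\lie{s}\oplus\dotsb\oplus\lie{s}$ splits as a Lie algebra direct sum of $d$ copies of the four-dimensional solvable algebra $\lie{s}:=\Lie(S)$, namely $\mbb{R}\ltimes\lie{h}_3$ (written $\mbb{R}\oplus\lie{h}_3$ in the statement). Since the Chevalley--Eilenberg complex of a direct sum of Lie algebras is the tensor product of the individual complexes, the algebraic Künneth theorem over $\mbb{R}$ delivers the displayed formula
\begin{equation*}
H^k(\lie{g})=\bigoplus_{k_1+\dotsb+k_d=k}H^{k_1}(\lie{s})\otimes\dotsb\otimes H^{k_d}(\lie{s}),
\end{equation*}
reducing everything to the computation of $H^*(\lie{s})$. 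Here I would exploit the semidirect structure, in which the generator $T$ of the $\mbb{R}$-factor acts on $\lie{h}_3=\langle X,Y,V\rangle$ by $[T,X]=-X$, $[T,Y]=Y$, $[T,V]=0$, i.e.\ with weights $-1,+1,0$. The cleanest route is the Hochschild--Serre spectral sequence for the ideal $\lie{h}_3\triangleleft\lie{s}$, with $E_2^{p,q}=H^p\bigl(\mbb{R},H^q(\lie{h}_3)\bigr)$. As $\mbb{R}$ is one-dimensional and acts on $H^q(\lie{h}_3)$ semisimply through the weights of $\ad T$, one has $H^p(\mbb{R},-)=0$ for $p\geq2$ while both $H^0$ and $H^1$ pick out the zero-weight subspace; in particular the sequence degenerates at $E_2$. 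The classes of $H^*(\lie{h}_3)$, represented by $1$ in degree $0$, by $x^*,y^*$ in degree $1$, by $x^*\wedge v^*,y^*\wedge v^*$ in degree $2$, and by $x^*\wedge y^*\wedge v^*$ in degree $3$, carry $\ad T$-weights $0$; $+1,-1$; $+1,-1$; $0$, so the zero-weight subspaces are $\mbb{R},0,0,\mbb{R}$ in degrees $0,1,2,3$. Distributing these over $p\in\{0,1\}$ gives exactly $H^0(\lie{s})\cong H^4(\lie{s})\cong\mbb{R}$, $H^1(\lie{s})\cong H^3(\lie{s})\cong\mbb{R}$, and $H^2(\lie{s})=\{0\}$. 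Alternatively, one can write out the Chevalley--Eilenberg differential on $\lie{s}^*$ and compute directly; this four-dimensional calculation is entirely elementary.

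The one genuinely content-bearing step is the vanishing $H^2(\lie{s})=\{0\}$. For the Heisenberg algebra alone $H^2(\lie{h}_3)\cong\mbb{R}^2$ is nonzero, and it is precisely the hyperbolic weights $\pm1$ of the $\mbb{R}$-action that annihilate these two classes inside $H^*(\lie{s})$, leaving cohomology concentrated in the zero-weight part; this is the same mechanism behind the non-K\"ahler behaviour in the Inoue case $d=1$. Everything else---the Künneth reduction, the transfer theorem, and the extreme-degree values (which also follow from unimodularity of $\lie{s}$ together with Poincar\'e duality)---is routine.
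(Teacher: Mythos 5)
Your proposal is correct and follows the same route as the paper: the first isomorphism is obtained by invoking \cite[Corollary~7.29]{Rag} under exactly the hypotheses the paper verifies (simple connectedness and solvability of $G$, real diagonalizability of the adjoint operators of $A$, Zariski density of $\Gamma$ in $(\mbb{R}^*)^d\ltimes N$), and the displayed formula is the K\"unneth decomposition for $\lie{g}=\lie{s}^{\oplus d}$ with $\lie{s}=\mbb{R}\ltimes\lie{h}_3$. The paper states the values of $H^*(\mbb{R}\oplus\lie{h}_3)$ without proof; your Hochschild--Serre computation via the $\ad T$-weights $(-1,+1,0)$, which correctly isolates the zero-weight classes and yields $H^2(\lie{s})=0$, is a sound and welcome filling-in of that omitted verification.
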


\begin{rem}
The above result shows that the topological Euler characteristic of $X$ is zero. 
This can also be directly deduced from the fact that $X$ is diffeomorphic to a 
tower of torus bundles over $(S^1)^d$. \ More precisely, the 
projection $G=A\ltimes N\to A$ induces a real fiber bundle $X\to \Gamma_A 
\backslash A\cong (S^1)^d$ with typical fiber $\Gamma_N\backslash N$ which in 
turn has the structure of a smooth fiber bundle over $(S^1)^d$ with fiber 
$(S^1)^{2d}$, cf.~the diagram in Section~\ref{Subs:CRfibration}. 
\end{rem}

\subsection{Closed holomorphic $1$-forms on $X$}
 
In this subsection we give a second proof of the fact that $X$ is not K\"ahler.

\begin{prop}\label{Prop:closed1forms}
There is no non-zero closed holomorphic $1$-form on $X$. In particular, $X$ 
is not K\"ahler.
\end{prop}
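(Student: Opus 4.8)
The plan is to lift the problem to the universal cover $(\mbb C\times\mbb H^{+})^{d}$, on which the left $\Gamma$-action is given factorwise by the explicit affine maps $z_{j}\mapsto z_{j}+a_{j}w_{j}+c_{j}$ and $w_{j}\mapsto t_{j}w_{j}+b_{j}$, where $t_{j}=1$ on $\Gamma_{N}$ and $t_{j}=\sigma_{2j}(u)$ on $\Gamma_{A}$. A closed holomorphic $1$-form $\eta$ on $X$ pulls back to a closed $\Gamma$-invariant holomorphic $1$-form; since the cover is simply connected I may write it as $\eta=df$ for a holomorphic function $f$, and invariance becomes $f\circ\gamma=f+\chi(\gamma)$ for a homomorphism $\chi\colon\Gamma\to(\mbb C,+)$. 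The goal is then to prove that $f$ is constant.

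First I would show that each $P_{j}:=\partial f/\partial z_{j}$ descends to $X$. Because every $g\in G$ acts with $\partial z_{j}'/\partial z_{j}=1$ and with $w_{j}'$ independent of the $z$-variables, differentiating $f\circ\gamma=f+\chi(\gamma)$ in $z_{j}$ gives $P_{j}\circ\gamma=P_{j}$. Hence $P_{j}$ is a holomorphic function on the compact manifold $X$, so $P_{j}\equiv p_{j}\in\mbb C$, and therefore $f=\sum_{j}p_{j}z_{j}+h(w)$ with $h$ holomorphic on $(\mbb H^{+})^{d}$.

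Next I would pin down $h$. Substituting into $f\circ\gamma=f+\chi(\gamma)$ yields, for $\gamma\in\Gamma_{N}$, the relation $h(w+b)-h(w)=\mathrm{const}-\sum_{j}p_{j}a_{j}w_{j}$, and for $\gamma\in\Gamma_{A}$ the relation $h(t_{1}w_{1},\dots,t_{d}w_{d})-h(w)=\mathrm{const}$. Applying $\partial_{w_{k}}\partial_{w_{l}}$ to the first relation kills its affine right-hand side, so $\partial_{k}\partial_{l}h$ is invariant under the translations $w\mapsto w+b$ coming from $\Gamma_{N}$; by Proposition~\ref{Prop:dense} these $b$ are dense in $\mbb R^{d}$, hence $\partial_{k}\partial_{l}h$ is invariant under all real translations, and a holomorphic function on $(\mbb H^{+})^{d}$ with that property is constant. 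Thus $h=\sum_{k\leq l}\beta_{kl}w_{k}w_{l}+\sum_{k}\gamma_{k}w_{k}+c_{0}$ is a polynomial of degree at most $2$.

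The number-theoretic core is then the $\Gamma_{A}$-relation, which forces $\beta_{kl}(t_{k}t_{l}-1)=0$ and $\gamma_{k}(t_{k}-1)=0$. For the primitive reciprocal unit $u_{0}\in\Gamma_{A}$ one has $t_{j}=\sigma_{2j}(u_{0})=\sigma_{2j-1}(u_{0})^{-1}$, because $\Gamma_{A}$ is the kernel of the action on $W_{1}$, so each product $\sigma_{2j-1}(u_{0})\sigma_{2j}(u_{0})$ equals $1$; moreover all conjugates $\sigma_{m}(u_{0})$ are positive and pairwise distinct since $u_{0}$ is primitive. Hence $t_{k}\neq1$ and $t_{k}t_{l}=(\sigma_{2k-1}(u_{0})\sigma_{2l-1}(u_{0}))^{-1}\neq1$ for all $k,l$, which kills every $\beta_{kl}$ and $\gamma_{k}$, so $h$ is constant. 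Feeding $h\equiv c_{0}$ back into the $\Gamma_{N}$-relation gives $\sum_{j}p_{j}a_{j}w_{j}\equiv0$; since the $a_{j}$-coordinates of $\Gamma_{N}$ range over the full-rank lattice $\Lambda$, all $p_{j}=0$ and $\eta=df=0$. The ``in particular'' then follows because the cohomology computation above gives $b_{1}(X)=d\geq1$, so a compact K\"ahler $X$ would carry a nonzero holomorphic $1$-form, necessarily closed, contradicting the first assertion. I expect the multiplicative non-vanishing $t_{k}\neq1$ and $t_{k}t_{l}\neq1$ to be the crux of the argument: it is precisely where the reciprocity of $u_{0}$ and the distinctness of its Galois conjugates are indispensable.
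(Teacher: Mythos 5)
Your proof is correct, and it ultimately rests on the same two inputs as the paper's --- the density statement of Proposition~\ref{Prop:dense} and the fact that the conjugates of the primitive reciprocal unit $u_0$ are positive, pairwise distinct and different from $1$ --- but the route is genuinely different. The paper stays at the level of the $1$-form: it first shows $\omega$ is $(\mbb{C}^*)^d$-invariant via the Cartan formula $\mathcal{L}_\xi\omega=\iota_\xi d\omega+d\iota_\xi\omega$ (the contraction $\iota_\xi\omega$ is a holomorphic function on the compact manifold $X$, hence constant), then uses closedness to put the pull-back in the normal form $\sum_j\lambda_j\,dz_j+\sum_j f_j(w)\,dw_j$, kills the $\lambda_j$ with the $N$-action on $dz_j$, and kills the $f_j$ with the density of the real translations $w\mapsto w+b$. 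You instead integrate the pull-back to a primitive $f$ carrying an additive $\Gamma$-cocycle; your observation that $\partial f/\partial z_j$ descends to $X$ and is therefore constant is the same maximum-principle step in disguise. The price of working with the primitive is one extra degree of freedom: the $w$-part $h$ is only pinned down as a quadratic polynomial, and eliminating it requires not only $t_k=\sigma_{2k}(u_0)\neq1$ (which the paper's argument also needs, implicitly, to remove the constant values of the $f_j$ via the $\Gamma_A$-action on $dw_j$) but also $t_kt_l\neq1$, i.e.\ $\sigma_{2k}(u_0)\sigma_{2l}(u_0)\neq1$; your derivation of this from reciprocity and the distinctness of the Galois conjugates is correct and is a number-theoretic input the paper's form-level argument never has to touch. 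Two small imprecisions, neither of which is a gap: the $a_j$-coordinates of $\Gamma_N$ form a dense subgroup of $\mbb{R}$ rather than a full-rank lattice (only their non-vanishing is needed to force $p_j=0$); and for odd $d$ the K\"ahler contradiction is cleaner if stated as: on a compact K\"ahler manifold the absence of nonzero closed holomorphic $1$-forms gives $H^{1,0}=H^{0,1}=0$ and hence $b_1=0$, whereas the cohomology computation gives $b_1(X)=d\geq1$.
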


\begin{proof}
Let $\omega$ be a closed holomorphic $1$-form on $X$ and let $\xi$ be a 
holomorphic vector field on $X$ induced by the $(\mbb{C}^*)^d$-action. Then we 
have
\begin{equation*}
\mathcal{L}_\xi(\omega)=\iota_\xi d\omega+d\iota_\xi\omega=0.
\end{equation*}
Hence, every closed holomorphic $1$-form on $X$ must be 
$(\mbb{C}^*)^d$-invariant. Pulling it back we get a $\Gamma$-invariant closed 
holomorphic $1$-form $\omega$ on $(\mbb{C}\times\mbb{H})^d$ which must be of 
the form
\begin{equation*}
\omega=\sum_{j=1}^d\lambda_jdz_j+\sum_{j=1}^d f_j(w_1,\dotsc,w_d)dw_j.
\end{equation*}
Since an element of $N$ acts on $dz_j$ by $dz_j\mapsto dz_j+adw_j$, we conclude 
that in fact
\begin{equation*}
\omega=\sum_{j=1}^d f_j(w_1,\dotsc,w_d)dw_j.
\end{equation*}
Now the claim follows from the fact that $\pi(\Gamma)$ 
contains a dense subgroup of the unipotent radical of the 
Borel subgroup of affine transformations in $\Aut\bigl((\mbb{H}^+)^d\bigr)$, 
see Proposition~\ref{Prop:dense}. 
\end{proof}

\subsection{The algebraic dimension of $X$}

We conclude by determining the algebraic dimension of $X$.

\begin{thm}
Every meromorphic function on $X$ is constant, i.e., $X$ has algebraic dimension 
zero.
\end{thm}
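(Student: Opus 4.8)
The plan is to show that the field of meromorphic functions $\mathcal{M}(X)$ reduces to constants by combining the holomorphic foliation structure with the density property of $\Gamma_N$. The key structural fact is that the algebraic dimension $a(X)$ equals the transcendence degree of $\mathcal{M}(X)$ over $\mbb{C}$, and for a manifold admitting an algebraic reduction, the meromorphic functions are pulled back from a lower-dimensional space. First I would exploit the $(\mbb{C}^*)^d$-action: any meromorphic function $f$ on $X$ can be averaged or analyzed along the orbits. The crucial observation is that the generic orbit closure of $(\mbb{C}^*)^d$ is the whole CR-nilmanifold fiber $\Gamma_N\backslash N$ by Proposition~\ref{Prop:closure}, and since $X$ contains no proper $(\mbb{C}^*)^d$-invariant analytic subset, any meromorphic function that is $(\mbb{C}^*)^d$-invariant must be constant.

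The second step is to reduce to the invariant case. Given a non-constant meromorphic function $f$, consider the holomorphic vector fields $\xi = \frac{\partial}{\partial z_j}$ generating the $(\mbb{C}^*)^d$-action. If $f$ were not invariant, its restriction to a generic fiber $\Gamma_N\backslash N$ would be a non-constant meromorphic function; but by the corollary following Proposition~\ref{Prop:closure}, every holomorphic function on $\Gamma_N\backslash G$ is constant, and I would extend this to meromorphic functions on the fibers using that the fiber $\Gamma_N\backslash N$ is a generic CR-submanifold on which the orbit closures of $(\mbb{C}^*)^d$ are dense. Concretely, I expect to argue that the poles and zeros of $f$ form $(\mbb{C}^*)^d$-invariant analytic hypersurfaces, which by Proposition~\ref{Prop:closure} must be empty or all of $X$; hence $f$ is holomorphic, hence (being bounded on the compact $X$) constant. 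The technical point is handling the divisor of $f$: the zero and polar sets are analytic subsets invariant under the flow of $\xi$ only after one checks that $f$ is indeed invariant.

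The cleanest route is therefore to first prove $(\mbb{C}^*)^d$-invariance directly. Since $\Aut^0(X)\cong(\mbb{C}^*)^d$ is connected and acts on the finite-dimensional space spanned by $f$ and its translates (the meromorphic functions transform in a finite-dimensional representation when $X$ is compact), I would show that the orbit map $t\mapsto f\circ t$ lands in a finite-dimensional vector space of meromorphic functions, on which $(\mbb{C}^*)^d$ acts algebraically; the weights must vanish because otherwise one produces a non-constant character realized by meromorphic functions, contradicting that the generic orbit is Zariski-dense in the fiber. Once $f$ is $(\mbb{C}^*)^d$-invariant, it descends to a meromorphic function on the quotient of $X$ by the foliation $\mathcal{F}$, but this quotient is not Hausdorff precisely because the orbit closures are the full fibers $\Gamma_N\backslash N$, forcing $f$ to be constant along each fiber and hence globally.

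The main obstacle I anticipate is the passage from holomorphic to meromorphic functions on the CR-fibers: the earlier corollary only handles holomorphic functions, and meromorphic functions could a priori have poles concentrated on the CR-submanifold in a way not immediately controlled by the density of orbits. To overcome this I would use the transverse hyperbolicity of $\mathcal{F}$ together with the fact that a meromorphic function constant on a dense set of leaves of a holomorphic foliation, where each leaf closure is a full fiber, must be constant on each fiber by continuity of the meromorphic function away from its indeterminacy locus; the indeterminacy locus, being a $(\mbb{C}^*)^d$-invariant analytic subset of codimension at least two, is then shown to be empty by Proposition~\ref{Prop:closure}. This reduces everything to showing $X$ has no non-constant holomorphic functions, which is already established, completing the proof that $a(X)=0$.
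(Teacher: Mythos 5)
Your argument splits into two steps: (i) a $(\mbb{C}^*)^d$-invariant meromorphic function is constant, and (ii) every meromorphic function on $X$ is $(\mbb{C}^*)^d$-invariant. Step (i) is fine: the polar set of an invariant $f$ is a $(\mbb{C}^*)^d$-invariant analytic subset, hence empty by Proposition~\ref{Prop:closure}, so $f$ is holomorphic and therefore constant. The genuine gap is step (ii), and none of the mechanisms you sketch for it holds up. The claim that the translates $f\circ t$, $t\in\Aut^0(X)$, span a finite-dimensional space on which $(\mbb{C}^*)^d$ acts algebraically is not a general fact about compact complex manifolds and is nowhere justified; the divisor of $f\circ t$ moves with $t$, and without a Kähler or class-$\mathcal{C}$ hypothesis there is no a priori control. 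The fallback of restricting $f$ to a fiber of $p$ does not typecheck: the fibers $\Gamma_N\backslash N$ are \emph{real} CR-submanifolds, so ``non-constant meromorphic function on the fiber'' has no meaning, and the corollary about holomorphic functions on $\Gamma_N\backslash G$ cannot be extended this way. Finally, the assertion that the zero and polar sets of a general $f$ are invariant hypersurfaces presupposes the invariance you are trying to prove, as you yourself note; the argument is circular at exactly the point where it needs to close.

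The paper's proof supplies the missing idea by never trying to prove invariance of individual meromorphic functions. It first shows that every holomorphic map from $X$ to a projective complex space is constant: by Cartan's reduction there is a universal such map $\pi\colon X\to Y$ with $Y$ projective and $\pi$ equivariant for $(\mbb{C}^*)^d$; the induced action on $\Alb(Y)$ is trivial because a non-trivial one would produce a non-zero closed holomorphic $1$-form on $X$, contradicting Proposition~\ref{Prop:closed1forms}; Sommese's fixed-point theorem then gives a $(\mbb{C}^*)^d$-fixed point in $Y$, whose $\pi$-fiber is an invariant analytic subset of $X$, hence all of $X$ by Proposition~\ref{Prop:closure}, so $Y$ is a point. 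Only then is the algebraic reduction brought in: its indeterminacy locus is an invariant analytic subset, hence empty, so it is a holomorphic map to a projective space and therefore constant. If you want to repair your write-up, you should replace your step (ii) by this fixed-point/Albanese argument (or an equivalent one); note that it uses Proposition~\ref{Prop:closed1forms}, which your proposal never invokes.
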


\begin{proof}
Due to~\cite[Example~2]{Car}, there exists a projective complex space $Y$ and 
a holomorphic map $\pi\colon X\to Y$ such that every holomorphic map from $X$ 
to any projective complex space facto\-rizes through $\pi$. Consequently, 
$(\mbb{C}^*)^d$ acts holomorphically on $Y$ such that $\pi$ is equivariant.

We claim that the induced action of $(\mbb{C}^*)^d$ on $\Alb(Y)$ is trivial. If 
this was not the case, the composed map $X\to Y\to\Alb(Y)$ would not be 
constant and hence we would obtain a non-zero closed holomorphic $1$-form on 
$X$, contradicting Proposition~\ref{Prop:closed1forms}.

Since $(\mbb{C}^*)^d$ acts trivially on $\Alb(Y)$, it has a fixed point in $Y$ 
due to~\cite{Som}; the $\pi$-fiber over this fixed point is a 
$(\mbb{C}^*)^d$-invariant analytic subset of $X$, hence $X$ itself due 
to~Proposition~\ref{Prop:closure}. It follows that $Y$ is a point, i.e., every 
holomorphic map from $X$ to a projective complex space is constant.

Now let us consider the algebraic reduction $a\colon X\dashrightarrow Z$ which 
is a priori only a meromorphic map. Since there are no proper 
$(\mbb{C}^*)^d$-invariant analytic subsets of $X$, we obtain a holomorphic map 
$X\to\mbb{P}_N$ by adding sufficiently many meromorphic functions. As we have 
seen above, this map must be constant, which proves the claim.
\end{proof}

\subsection{Possible extensions of the construction}

In the same spirit as in Inoue's original paper, we can modify the 
multiplicative action of $\Gamma_A$ as described in 
equation~\eqref{Eqn:MultiplicativeAction} as follows.

Firstly, every generator of $\Gamma_A$ may be combined with a central element 
that acts by translation on $\mbb{C}^d$, compare the definition of $g_0$ in 
equation~(18) on page~276 in~\cite{In74}.

Furthermore, we can construct analogs of the surfaces $S_N^{(-)}$ by adding any
number of minus signs in the coordinates $(z_1,\dotsc,z_d)\in\mbb{C}^d$, 
compare the definition of $g_0$ in equation~(21) on page~279 in~\cite{In74}. Of 
course, such a choice will diminish the dimension of the automorphism group of 
the resulting quotient manifold.

\end{document}